\documentclass[12pt]{article}
\usepackage[margin=1.25in]{geometry}
\usepackage{pgfplots}
\usepackage{subfig}
\usepackage{amsmath}
\usepackage{amssymb}
\usepackage{algorithmic}
\usepackage{algorithm}
\usepackage{mathtools}
\usepackage{color}
\usepackage[normalem]{ulem}
\usepackage{setspace}
\usepackage{tikz}
\usetikzlibrary{arrows,
	arrows.meta,
	bending,calc,patterns,angles,quotes}
\usepackage{float}
\usepackage{hyperref}

\makeatletter
\providecommand*{\cupdot}{%
	\mathbin{%
		\mathpalette\@cupdot{}%
	}%
}
\newcommand*{\@cupdot}[2]{%
	\ooalign{%
		$\m@th#1\cup$\cr
		\hidewidth$\m@th#1\cdot$\hidewidth
	}%
}
\makeatother

\newtheorem{example}{Example}

\newtheorem{theorem}{Theorem}

\newtheorem{remark}{Remark}
\newtheorem{definition}{Definition}

\def\endpf{{\ \hfill\hbox{\vrule width1.0ex height1.0ex}\parfillskip 0pt
	}}

\newcounter{figurecounter}
\begin{document}
\setlength{\baselineskip}{20pt}

\title{When does admission control reduce congestion?
\\A stochastic ordering approach}

\author{Royi Jacobovic\footnote{School of Mathematical Sciences, Tel-Aviv University, Tel-Aviv, Israel, 6997800, E-mail: royijacobo@tauex.tau.ac.il.} \footnote{Jacobovic acknowledges the support of the Israel Science Foundation, Grant \#3739/24.} \and Daniel Stiebel \footnote{School of Mathematical Sciences, Tel-Aviv University, Tel-Aviv, Israel, 6997800, E-mail: stiebel1@mail.tau.ac.il}}
\maketitle

\begin{abstract}
\setlength{\baselineskip}{20pt}
Admission control is a widely used mechanism for regulating congestion in stochastic service systems, where restricting arrivals is expected to reduce workload and improve performance. 

In many settings, however, customers make decisions based on the observed system state, creating a feedback loop between control actions and future arrivals. This raises a fundamental question: does admission control necessarily reduce congestion under state-dependent behavior?

We address this question using stochastic ordering of workload processes. We show that admission control may fail to reduce congestion due to endogenous feedback effects, and construct explicit counterexamples illustrating this phenomenon.

We then identify a general condition under which admission control is effective over the initial busy period. In particular, independence between customers’ types and service requirements eliminates the adverse feedback mechanism and restores stochastic dominance.

These results highlight fundamental limitations of admission control and the importance of accounting for behavioral responses in the design of stochastic service systems.

\end{abstract}

\noindent\textbf{Keywords:} Admission control, Service systems, Feedback effect, Stochastic dominance, Coupling.

\smallskip

\noindent\textbf{MSC Classification:} 60K25 $\cdot$ 60K30 $\cdot$ 68M20 $\cdot$ 90B22.

\section{Introduction}

Service systems are a fundamental component of modern infrastructure, encompassing applications such as communication networks, cloud computing platforms, call centers, and transportation systems. A central challenge in these systems is congestion, which arises when incoming demand temporarily exceeds service capacity. Managing congestion is therefore a key objective in the design and control of such systems.

Admission control mechanisms are widely implemented in practice as a means of regulating system performance under uncertainty. In many service systems, the decision to accept or reject incoming customers plays a critical role in maintaining quality of service, controlling delays, and preventing overload. Examples include cloud computing platforms that throttle incoming jobs to mitigate latency, call centers that restrict access during peak demand, and healthcare systems that rely on triage policies under congestion. 

A common feature of these systems is that users adapt their behavior to observable system conditions, such as delay or workload. As a result, admission control is not merely a passive filtering mechanism, but an active component in a dynamic system with feedback, where control actions may influence future arrivals and system evolution.

In this context, \emph{admission control} provides a natural mechanism for regulating congestion by selectively accepting or rejecting incoming customers based on the current state of the system. At an intuitive level, admission control is expected to reduce congestion: by limiting the inflow of customers, the system workload should decrease, leading to improved performance.

Yet, this intuition relies on a critical implicit assumption: that customer behavior is unaffected by the control policy. In many stochastic service systems, however, customers react to the observed workload, creating a feedback loop between control actions and future arrivals. In such settings, it is no longer clear whether admission control improves or degrades system performance.

This observation raises a fundamental question:
\begin{quote}\centering
\emph{Does admission control necessarily reduce congestion when customers respond to the system state?}
\end{quote}

In this work, we address this question through the lens of \emph{stochastic ordering of workload processes}, which provides a natural way to compare entire system trajectories rather than pointwise performance measures. Specifically, we say that an admission control policy is effective if the resulting workload process is stochastically dominated by that of the corresponding uncontrolled system.

Our analysis reveals that the targeted benefit of admission control does not hold in full generality. In particular, we identify a key mechanism driving this failure: although admission control locally reduces workload by rejecting customers, it may also modify the system state in a way that influences future customer decisions. This \emph{feedback effect} can lead to higher congestion levels in the controlled system. Intuitively, by reducing congestion, admission control may make the system more attractive to certain customers, thereby increasing future arrivals and potentially offsetting its intended effect.

To formalize this phenomenon, we construct explicit counterexamples highlighting situations in which admission control does not reduce congestion in the sense of stochastic dominance. These examples reveal two distinct mechanisms: one driven by the admission of customers with large service requirements, and another arising from the cumulative effect of multiple smaller arrivals. Together, these mechanisms show that even natural admission control policies may backfire in the presence of state-dependent customer behavior, as control actions reshape future arrival patterns and create a feedback loop between system dynamics and user decisions.

This naturally raises the question of whether structural conditions can prevent such adverse feedback effects. To complement the negative results, we identify a general structural condition under which admission control regains its effectiveness. In particular, we show that when customers’ types and service requirements are independent, the harmful feedback mechanism disappears, and admission control induces a stochastic reduction of workload over the initial busy period. 

Taken together, these results provide a key insight: \begin{quote}\centering\emph{The effectiveness of admission control critically depends on the interaction between control actions and state-dependent user behavior, which in turn shapes future arrivals.}
\end{quote}
This highlights the need to move beyond purely exogenous models of arrivals when analyzing controlled stochastic service systems. 

Beyond its theoretical interest, our work has broader implications for the design of stochastic service systems.  It suggests  that neglecting behavioral feedback in the design of control policies may lead to unintended and potentially adverse system-level outcomes. This perspective is particularly relevant in modern applications where users adapt their behavior to observable system conditions, such as delays, congestion, or service quality, as in cloud computing platforms, call centers, and online marketplaces.

\begin{figure}
\centering

\begin{minipage}{0.8\textwidth}
\centering
\begin{tikzpicture}[>=Stealth, scale=1]

\node[draw, rounded corners, align=center, minimum width=4cm] (state)
at (0,0) {System state};

\node[draw, rounded corners, align=center, minimum width=4.5cm] (control)
at (5,1.5) {Admission control\\(policy decision)};

\node[draw, rounded corners, align=center, minimum width=4.5cm] (behavior)
at (5,-1.5) {Customer behavior\\(balking decision)};

\node[draw, rounded corners, align=center, minimum width=5cm] (arrival)
at (0,-3) {Effective arrival process};

\draw[->, blue, thick] (state) -- (control);

\draw[->, green!60!black, thick] (control) -- (arrival);
\draw[->, green!60!black, thick] (behavior) -- (arrival);

\draw[->, red, thick] (arrival) -- (state);

\end{tikzpicture}

\medskip
{\small (a) Without behavioral feedback}
\end{minipage}

\vspace{0.8cm}

\begin{minipage}{0.8\textwidth}
\centering
\begin{tikzpicture}[>=Stealth, scale=1]

\node[draw, rounded corners, align=center, minimum width=4cm] (state)
at (0,0) {System state\\(congestion level)};

\node[draw, rounded corners, align=center, minimum width=4.5cm] (control)
at (5,1.5) {Admission control\\(policy decision)};

\node[draw, rounded corners, align=center, minimum width=4.5cm] (behavior)
at (5,-1.5) {Customer behavior\\(balking decision)};

\node[draw, rounded corners, align=center, minimum width=5cm] (arrival)
at (0,-3) {Effective arrival process};

\draw[->, blue, thick] (state) -- (control);
\draw[->, blue, thick] (state) -- (behavior);

\draw[->, green!60!black, thick] (control) -- (arrival);
\draw[->, green!60!black, thick] (behavior) -- (arrival);

\draw[->, red, thick] (arrival) -- (state);

\end{tikzpicture}

\medskip
{\small (b) With behavioral feedback}
\end{minipage}

\caption{Comparison of feedback structures in service systems with admission control.
In both panels, the system state determines admission decisions, while the effective arrival process results from the combined effect of control actions and customer behavior, and drives the subsequent system evolution (red arrow).
In (a), customer behavior is independent of the system state, so admission control affects arrivals only through policy decisions.
In (b), customer behavior is state-dependent, introducing an additional feedback loop (blue arrow) from the system state to user decisions.
This behavioral feedback endogenizes the arrival process and may significantly alter the impact of admission control on congestion.}

\end{figure}

\subsection{Contributions}

The main contributions of this paper are as follows:
\begin{itemize}
    \item We introduce a trajectory-based notion of effectiveness for admission control using stochastic dominance of workload processes.
    \item We demonstrate that admission control may fail to reduce congestion due to endogenous feedback effects.
    \item We construct explicit counterexamples illustrating distinct mechanisms behind this failure.
    \item We establish a general condition under which admission control is effective over the initial busy period.
    \item We show how this result can be applied to derive stability conditions for state-dependent queueing systems with impatient customers.
\end{itemize}

\subsection{Related literature}

Admission control in service systems has been extensively studied in the queueing literature. A large body of work focuses on identifying optimal admission policies under various performance criteria. For example, Van Nunen and Puterman~\cite{Van Nunen1983} studied a GI/M/s queue with admission control in which customers may be accepted or rejected. Haviv and Puterman~\cite{Haviv1998} analyzed an M/M/1 queue with admission control and showed that gain-optimal stationary policies are of control-limit type, with at most two consecutive thresholds. Ata and Shneorson~\cite{Ata2006} examined a system with both admission and service-rate control and characterized optimal dynamic policies.

More recent contributions consider learning-based and revenue-maximizing formulations. For instance, Cohen, Subramanian, and Zhang~\cite{Cohen2024} studied an admission control problem in an M/M/1 queue with unknown parameters and proposed a learning-based dispatching algorithm that achieves bounded regret relative to the optimal policy. Carr and Duenyas~\cite{Carr2000} investigated admission control and sequencing decisions in production systems, while Maglaras and Zeevi~\cite{Maglaras2005} analyzed pricing and admission decisions in service systems with heterogeneous customers.

Another strand of the literature studies regulatory mechanisms in queues, where customers decide whether to join based on prices or other incentives. In this framework, a social planner sets an admission fee, and each arriving customer chooses whether to join or balk based on partial information about the system state. This setting typically leads to equilibrium joining behavior and can therefore be interpreted as an endogenous admission control mechanism. Seminal contributions include Naor~\cite{Naor1969} and Knudsen~\cite{Knudsen1972}, with comprehensive surveys provided by Hassin and Haviv~\cite{Hassin2003} and Hassin~\cite{Hassin2016}.

Our notion of effectiveness is formulated in terms of \emph{first-order stochastic dominance of workload functionals}, which connects our study to a broad literature on stochastic ordering in queueing systems. For example, Bodas and Jacobovic~\cite{Bodas2024} established stochastic dominance results for workload functionals over busy periods in time-varying queues with impatience. Bhattacharya and Ephremides~\cite{Bhattacharya1991} derived stochastic monotonicity results for multi-server queues with impatient customers. Jouini and Dallery~\cite{Jouini2007} obtained stochastic dominance relations for certain functionals in M/M/k/K+M systems.

Additional contributions on stochastic monotonicity in queueing networks include the works of Shanthikumar and Yao~\cite{Shanthikumar1986,Shanthikumar1987,Shanthikumar1989}, as well as Adan and Van der Wal~\cite{Adan1989} and Van Dijk and Van der Wal~\cite{Van Dijk1989}, who studied monotonicity properties in closed and tandem queueing networks. A related line of research originates in the seminal paper of Ross~\cite{Ross1978}, which investigated stochastic orderings in queues with nonstationary Poisson arrivals, followed by further developments by Heyman~\cite{Heyman1982} and Rolski~\cite{Rolski1981,Rolski1986}.

In contrast to these works, our focus is not on monotonicity with respect to model parameters, but rather on the structural effect of admission control on the entire workload trajectory.

\subsection{Structure of the paper}

The remainder of the paper is organized as follows.

In Section~2 we introduce the general probabilistic framework. We describe a single-server queueing system with general arrivals and a flexible admission control mechanism. To capture realistic behavioral features, each customer is characterized by a type and by partial information about the past evolution of the system, which together determine the customer's joining decision.

Section~3 introduces the notion of effectiveness based on stochastic dominance and provides an intuitive discussion explaining why admission control does not necessarily reduce congestion when joining decisions depend on the observed workload.

Section~4 formalizes this intuition by constructing two counterexamples showing that admission control policies may fail to be effective. Both examples arise in single-server queues with impatient customers and highlight different mechanisms through which admission control may increase congestion.

Section~5 introduces the notion of \emph{initial busy-period effectiveness} and presents the main positive result of the paper. We establish a general condition under which any contingent admission control policy is effective over the initial busy period. We also illustrate how this result can be used to derive stability conditions for state-dependent queueing systems with impatient customers.

Section~6 contains the proof of the main result, which relies on a novel coupling construction between the controlled and uncontrolled workload processes.

Finally, in Section~7 we provide a brief conclusion of the current research.

\section{Model description}\label{sec:model}
In this section, we introduce two queueing models that differ in the way arrivals are handled. 
The baseline model admits all customers, whereas the controlled model incorporates an admission control mechanism.

\subsection{Baseline model: no admission control}
\label{subsec:baseline_model}

\paragraph*{Probability space and primitives.}

All random variables are defined on a common probability space 
$(\Omega,\mathcal{F},\mathbb{P})$.

Let $\mathcal{T} \equiv (T_n)_{n=1}^{\infty}$ be a sequence of random variables representing the arrival epochs, satisfying
\[
0 < T_1 < T_2 < \cdots \quad \text{almost surely}.
\]

For each $n\ge1$, let $Y_n:\Omega\to\mathcal{Y}$ be a random variable representing the type of the $n$-th arriving customer, where the type space $\mathcal{Y}$ is an arbitrary topological space. 
In addition, for each $n\ge1$, let $S_n:\Omega\to(0,\infty)$ denote the service requirement of the $n$-th arriving customer in case that the customer joins the queue.

\paragraph*{Workload dynamics.}

Consider a single-server queue operating at a constant unit service rate. 
For an initial workload $x>0$, define the workload process
\[
W^x \equiv (W^x_t)_{t\ge0},
\qquad W^x_0 = x.
\]

The joining decision of the $n$-th customer is represented by a binary random variable 
$I_n \in \{0,1\}$, where $I_n=1$ indicates joining and $I_n=0$ indicates balking.

The workload evolves according to
\begin{equation*}
W_t^x\equiv L_t^x-\inf_{0\leq s<t}\left(L_s^x\right)^-,
\qquad t\ge0,
\end{equation*}
where
\begin{equation*}
L^x_t
\equiv x+\sum_{n:T_n \le t} I_n S_n - t,
\qquad t\ge0,    
\end{equation*}
and $a^-\equiv \min(a,0)$ for every $a\in\mathbb{R}$.

\paragraph*{Statistical assumptions (conditional on $\sigma(\mathcal{T})$).}

All assumptions stated below are made conditionally on $\sigma(\mathcal{T})$. In what follows, assume that $H(\cdot)$ is a probability distribution on $\mathcal{Y}$. In addition, assume that  $\Psi(s;y)$ is a probability kernel, i.e., for each $y\in\mathcal{Y}$, $\Psi(s;y)$ is a distribution function in $s$, and for each $s>0$, $\Psi(s;y)$ is Borel in $y$. In addition, assume that $\Phi:\mathcal{Y}\times[0,\infty)\to\{0,1\}$ is some measurable function. 

\begin{itemize}

\item[\textbf{(A1)}]
The sequence $(Y_n)_{n=1}^{\infty}$ is identically distributed according to $H(\cdot)$, and for each $n\ge1$, the type of the $n$-th arriving customer is independent of the history of the system prior to the arrival epoch $T_n$. That is, $Y_n$ is independent of
\[
\sigma\!\left(
\left\{S_j : 1\le j < n\right\}
\cup
\left\{Y_j : 1\le j < n\right\}\right).
\]

\item[\textbf{(A2)}]
The balking decision of each customer may depend on the customer's type and the workload level at his arrival epoch. Formally, for each $n\ge1$, the decision of the $n$-th arriving customer is provided by the random variable:
\begin{equation*}
    I_n\equiv\Phi\left(Y_n,W^x_{T_n-}\right).
\end{equation*}

\item[\textbf{(A3)}]
The sequence $(S_n)_{n=1}^{\infty}$ is identically distributed, and for each $n\ge1$, the conditional distribution of $S_n$ given 
\begin{equation*}
    \sigma\!\left(
\left\{W^x_t : 0\le t < T_n\right\}
\cup
\left\{Y_j : 1\le j < n\right\}
\right)
\end{equation*}
depends only on $Y_n$ and equals $\Psi(\cdot;Y_n)$.

\end{itemize}

\begin{remark}
\normalfont
Assumption \textbf{(A1)} implies that $(Y_n)_{n=1}^{\infty}$ is an IID sequence.
\end{remark}

\begin{remark}\normalfont Intuitively, Assumption \textbf{(A3)} means that the service requirement of a customer depends only on the customer's type and not on the past evolution of the system.
    
\end{remark}

\begin{remark}\normalfont
Assumptions \textbf{(A1)}--\textbf{(A3)} jointly imply that the service requirements of customers who join the system are mutually independent. 
However, this does not imply independence of the service requirements of all potential customers, including those who eventually balk.
\end{remark}

\begin{remark}\normalfont
The above framework is sufficiently general to accommodate state-dependent and type-dependent behavior. 
In particular, the service requirement $S_n$ may depend on the customer's type $Y_n$, for example when $S_n$ is determined as the solution of an optimization problem parameterized by $Y_n$. 
Such formulations arise naturally in queueing models with discretionary (or endogenous) service requirements; see, e.g., Debo and Li \cite{Debo2021}, Feldman and Segev \cite{Feldman2022}, Gat and Jacobovic \cite{Gat2026}, and Jacobovic \cite{Jacobovic2022a,Jacobovic2022b}.
\end{remark}

\begin{example}\label{remark: example11}\normalfont
A standard framework consistent with the above assumptions is the following. 
Assume that $\mathcal{T}$ and $((S_n,Y_n))_{n=1}^{\infty}$ are independent and that $((S_n,Y_n))_{n=1}^{\infty}$ is an IID sequence such that $Y_1\sim H(\cdot)$ and $S_1$ conditioned on $\sigma(Y_1)$ is distributed according to $\Psi(\cdot;Y_1)$. 
In addition, suppose that each customer decision whether to join or balk is based solely on the customer's type and the workload observed at the arrival epoch. 
Formally, for each $n\ge1$, we assume
\[
I_n \in \sigma\!\left(\{Y_n, W^x_{T_n-}\}\right).
\]    
\end{example}
\subsection{Admission-controlled model}
\label{subsec:admission_control}

We now introduce a model with an admission control mechanism.

\paragraph*{Probability space and primitives.}

All random variables are defined on a common probability space 
$(\hat{\Omega},\hat{\mathcal{F}},\hat{\mathbb{P}})$, which may differ from $(\Omega,\mathcal{F},\mathbb{P})$.

Let $\hat{\mathcal{T}} \equiv (\hat{T}_n)_{n=1}^{\infty}$ be a sequence of random variables representing the arrival epochs, satisfying
\[
0 < \hat{T}_1 < \hat{T}_2 < \cdots \quad \text{almost surely}.
\]

For each $n\ge1$, let $\hat{Y}_n:\hat{\Omega}\to\mathcal{Y}$ denote the type of the $n$-th arriving customer. 
Furthermore, for each $n\ge1$, let $\hat{S}_n:\hat{\Omega}\to(0,\infty)$ denote the service requirement of the $n$-th arriving customer if the customer joins the queue.

Finally, for each $n\ge1$, let $\hat{Z}_n:\hat{\Omega}\to(0,1)$ be a random variable which allows the dispatcher to randomize the admission decision for the $n$-th arriving customer.

\paragraph*{Workload dynamics under control.}

Let $\hat{I}_n\in\{0,1\}$ denote the customer's joining decision and 
$\hat{J}_n\in\{0,1\}$ denote the dispatcher's admission decision.

The controlled workload process
\[
\hat{W}^x \equiv (\hat{W}^x_t)_{t\ge0},
\qquad \hat{W}^x_0 = x,
\]
evolves according to
\begin{equation*}
\hat W_t^x\equiv \hat L_t^x-\inf_{0\leq s<t}\left(\hat L_s^x\right)^-,
\qquad t\ge0,
\end{equation*}
where
\begin{equation*}
\hat L^x_t
=x+\sum_{n:T_n \le t} \hat I_n\hat{J}_n \hat S_n - t,
\qquad t\ge0.    
\end{equation*}

Between arrival epochs\textcolor{red}{,} the workload decreases deterministically at unit rate. 
Thus, the workload process evolves deterministically between successive arrivals and experiences jumps only at arrival epochs of customers who are admitted and willing to join.

\paragraph*{Statistical assumptions.}
The following assumption ensures that the arrival process distribution is the same in both systems. 
\begin{itemize}
    \item[\textbf{($\hat{\text{A}}$0)}] We assume that the distribution of $\hat{\mathcal{T}}$ with respect to $\hat{\mathbb{P}}$ is the same as the distribution of $\mathcal{T}$ with respect to $\mathbb{P}$. 
\end{itemize}
All assumptions stated below are made conditionally on $\sigma(\hat{\mathcal{T}})$.

\begin{itemize}

\item[\textbf{($\hat{\text{A}}$1)}]
The sequence $(\hat{Y}_n)_{n=1}^{\infty}$ is identically distributed according to $H(\cdot)$. Furthermore, for each $n\ge1$, the type of the $n$-th arriving customer is independent of the history of the system prior to his arrival epoch. That is, $\hat{Y}_n$ is independent of
\[
\sigma\!\left(
\left\{\hat{S}_j : 1\le j < n\right\}
\cup
\left\{\hat{Y}_j : 1\le j < n\right\}
\cup
\left\{\hat{Z}_j : 1\le j \leq n\right\}\right).
\]

\item[\textbf{($\hat{\text{A}}$2)}]
The balking decision of each customer may depend on the customer's type and the workload level at his arrival epoch. Formally, for each $n\ge1$, the decision of the $n$-th arriving customer is provided by the random variable:
\begin{equation*}
    \hat{I}_n\equiv\Phi\left(\hat{Y}_n,\hat{W}_{\hat{T}_n-}\right).
\end{equation*}

\item[\textbf{($\hat{\text{A}}$3)}]
For each $n\ge1$, the conditional distribution of $\hat{S}_n$ given
\[
\sigma\!\left(
\left\{\hat{W}^x_t : 0\le t < \hat{T}_n\right\}
\cup
\left\{\hat{Y}_j : 1\le j \leq n\right\}
\cup
\left\{\hat{Z}_j : 1\le j \leq n\right\}\right)
\] 
depends only on $\hat{Y}_n$ and equals $\Psi(\cdot;\hat{Y}_n)$.

\item[\textbf{($\hat{\text{A}}$4)}] At each step the randomization variable is independent from past events. That is, for each $n\geq1$, $\hat{Z}_n$ is uniformly distributed on the unit interval and it is independent from  
\[
\sigma\!\left(
\left\{\hat{W}^x_t : 0\le t < \hat{T}_n\right\}
\cup
\left\{\hat{Y}_j : 1\le j \leq n\right\}
\cup
\left\{\hat{Z}_j : 1\le j < n\right\}\right).
\]

\item[\textbf{($\hat{\text{A}}$5)}]
The dispatcher's decision regarding each customer may depend on the randomization variable and on the history of the workload process until his arrival epoch. Formally, for each $n\ge1$, the random variable $J_n$ indicating the admission of the $n$-th arriving customer is measurable with respect to
\begin{equation*}
    \sigma\!\left(
\left\{\hat W^x_t : 0\le t < T_n\right\}\right)\otimes\sigma\left(\hat{Z}_n\right).
\end{equation*}
\end{itemize}
\begin{remark}
  \normalfont Intuitively, the last phrase in \textbf{($\hat{\text{A}}$2)} ensures that the admission control mechanism does not alter the intrinsic balking behavior of customers. Whenever a customer observes the same information as in the baseline model, the joining decision must coincide with that of the baseline system.  
\end{remark}

\begin{remark}
\normalfont
Note that if $\hat{J}_n \equiv 1$ for all $n\ge1$, i.e., if all arriving customers are admitted, then the present model coincides with the baseline model, which can therefore be viewed as a special case. 

The reason for presenting the two models separately is the following. Defining both models within the framework of the controlled model would implicitly impose a particular coupling between them. In the sequel, we introduce a different, nontrivial coupling of the two models. For clarity of exposition, we therefore prefer the current (admittedly longer) separate presentation.
\end{remark}

\begin{example}\label{remark: example22}\normalfont
A standard framework consistent with the above assumptions can be described as follows. 
Let $\hat{\mathcal{T}}$, $(\hat{Z}_n)_{n\ge 1}$, and $((\hat{S}_n,\hat{Y}_n))_{n\ge 1}$ be independent random objects such that: 
\begin{enumerate}
    \item $\hat{\mathcal{T}}$ is an arrival process  consistent with $\hat{\textbf{A}}$\textbf{0}.

    \item $(\hat{Z}_n)_{n\ge 1}$ is an IID sequence of random variables uniformly distributed on the unit interval.

    \item $((\hat{S}_n,\hat{Y}_n))_{n\ge 1}$ is an IID sequence with $\hat{Y}_1 \sim H(\cdot)$, and, conditionally on $\hat{Y}_1$, the service time $\hat{S}_1$ is distributed according to $\Psi(\cdot;\hat{Y}_1)$.
\end{enumerate}  
Furthermore, assume that each customer decides whether to join or balk based solely on their type and the workload observed at their arrival epoch. Formally, for each $n \ge 1$, we assume
\[
\hat{I}_n \in \sigma\!\bigl(\{\hat{Y}_n, \hat{W}^x_{\hat{T}_n-}\}\bigr).
\]

In addition, assume that for every $n\ge1$, the admission of the $n$-th arriving customer depends only on the randomization variable $\hat{Z}_n$ and the arrival epoch $\hat{T}_n$. That is, conditioned on $\mathcal{T}$ (with $\hat{T}_1,\hat{T}_2,\ldots$ treated as fixed), for every $n\ge1$ we assume
\begin{equation*}
\hat{J}_n \in \sigma(\hat{Z}_n).    
\end{equation*}
Note that once $\hat{J}_n\equiv1$ for every $n\ge1$, then we get back the model discussed in Example~\ref{remark: example11}. 
\end{example}

\section{Effective admission control}\label{sec:effective}

This section formalizes the notion of effectiveness for an admission control policy and clarifies its probabilistic interpretation.

Recall that the baseline and the admission-controlled models give rise to two workload processes, $W^x$ and $\hat{W}^x$, both starting from the same initial level $x \ge 0$. 
By definition, the two systems are distributionally identical, and the difference between them lies solely in the implementation of admission control.

\subsection{Stochastic comparison on path space}

Both workload processes have right-continuous sample paths with left limits and therefore take values in the Skorokhod space $\mathcal{D}([0,\infty))$.

We denote by $\mathbb{P}_{W^x}$ and $\hat{\mathbb{P}}_{\hat{W}^x}$ their respective probability laws on this space. We equip $\mathcal{D}([0,\infty))$ with the natural pointwise partial order: for $u,v \in \mathcal{D}([0,\infty))$, we write
\[
u \preceq v 
\quad \text{if and only if} \quad 
u(t) \le v(t) \text{ for all } t \ge 0.
\]

A measurable functional $f : \mathcal{D}([0,\infty)) \to [0,\infty)$ is said to be nondecreasing if
\[
u \preceq v 
\quad \Longrightarrow \quad 
f(u) \le f(v).
\]

\begin{definition}\label{def:effective}
The admission control policy implemented in the controlled system is said to be \emph{effective} if
\[
\hat{W}^x \preceq_{\mathrm{st}} W^x,
\]
that is, if for every nonnegative nondecreasing measurable functional 
$f : \mathcal{D}([0,\infty)) \to [0,\infty)$,
\[
\hat{\mathbb{E}}f(\hat{W}^x)
\le
\mathbb{E}f(W^x).
\]
Otherwise, the policy is said to be ineffective.
\end{definition}

This definition captures the idea that, under an effective admission control policy, we expect the entire workload trajectory to be stochastically smaller than in the baseline system.

\subsection{The natural coupling}

Since service requirements are strictly positive, both workload processes evolve as piecewise-deterministic processes with upward jumps at arrival epochs of customers who join and negative linear unit drift between the jumps.

If the joining decision of each customer is determined uniquely by his type and independently from the workload history (e.g., when all admitted customers join the system), the two systems can be constructed on a common probability space using identical arrival, types and service sequences. 
Under such a coupling, as illustrated in Figure~1, the controlled system simply removes some upward jumps relative to the baseline system.

In this case, a pathwise ordering holds:
\[
\hat{W}^x(t) \le W^x(t),
\qquad \forall t \ge 0,
\quad \text{almost surely}.
\]
Consequently, stochastic dominance follows immediately.

\begin{figure}[H]
    \centering
    \begin{tikzpicture}[scale=0.9]
        \draw[->] (0,0) -- (11,0) node[right] {$t$};
        \draw[->] (0,0) -- (0,7.1) node[above] {$\textcolor{blue}{W^x_t}, \textcolor{red}{\hat {W}^x_t}$};


        \fill[color = blue] (canvas cs: x = 2cm, y = 0cm) circle(2pt);
        \fill[color = red] (canvas cs: x = 2cm, y = -0.1cm) circle(2pt);
        
        \fill[color = blue] (canvas cs: x = 3cm,y = 0cm) circle (2pt);
        
        \fill[color = blue] (canvas cs: x = 5cm, y = 0cm) circle (2pt);
        \fill[color = red] (canvas cs: x = 5cm, y = -0.1cm) circle(2pt);
        
        \fill[color = blue] (canvas cs: x= 7.5cm, y = 0cm) circle (2pt);
        
        \fill[color = blue] (canvas cs: x = 10cm, y = 0cm) circle (2pt);
        \fill[color = red] (canvas cs: x = 10cm, y = -0.1cm) circle(2pt);

        \draw[line width=0.5mm, color = blue]{} (0cm, 3.1cm) -- (2cm, 1.1cm);
        \draw[line width=0.5mm, color = blue, dashed]{} (2cm, 1.1cm) -- (2cm, 3.6cm);
        \draw[line width=0.5mm, color = blue]{} (2cm, 3.6cm) -- (3cm, 2.6cm);
        \draw[line width=0.5mm, color = blue, dashed]{} (3cm, 2.5cm) -- (3cm, 5cm);
        \draw[line width=0.5mm, color = blue]{} (3cm, 5cm) -- (5cm, 3cm);
        \draw[line width = 0.5mm, color = blue, dashed]{}{} (5cm, 3cm) -- (5cm, 6.5cm);
        \draw[line width=0.5mm, color = blue]{} (5cm, 6.5cm) -- (7.5cm, 4cm);
        \draw[line width=0.5mm, color = blue, dashed]{} (7.5cm, 4cm) -- (7.5cm, 7.5cm);
        \draw[line width=0.5mm, color = blue]{} (7.5cm, 7.5cm) -- (10cm, 5cm);
        \draw[line width=0.5mm, color = blue, dashed]{}{} (10cm, 5cm) -- (10cm, 7cm);
        \draw[line width=0.5mm, color = blue]{}{} (10cm, 7cm) -- (10.5cm, 6.5cm);

        \draw[line width=0.5mm, color = red]{} (0cm, 3cm) -- (2cm, 1cm);
        \draw[line width=0.5mm, color = red, dashed]{} (2cm, 1cm) -- (2cm, 3.5cm);
        \draw[line width=0.5mm, color = red]{} (2cm, 3.5cm) -- (5cm, 0.5cm);
        \draw[line width=0.5mm, color = red, dashed]{} (5cm, 0.5cm) -- (5cm, 4 cm);
        \draw[line width=0.5mm, color = red]{}{} (5cm, 4cm) -- (9cm, 0cm);
        \draw[line width=0.5mm, color = red]{}{} (9cm, 0cm) -- (10cm, 0cm);
        \draw[line width=0.5mm, color = red, dashed]{}{} (10cm, 0cm) -- (10cm, 2cm);
        \draw[line width=0.5mm, color = red]{}{} (10cm, 2cm) -- (10.5cm, 1.5cm);
    \end{tikzpicture} 
    \caption{\setstretch{1.3}
Illustration of the natural coupling between the uncontrolled workload process $W^x$ (blue) and the controlled workload process $\hat{W}^x$ (red) under the assumption that all admitted customers necessarily join. Blue dots on the time axis represent arrival epochs in the uncontrolled system, while red dots correspond to customers admitted to the controlled system. Since every admitted customer necessarily joins the queue and service times are strictly positive, both processes can be constructed on a common probability space so that $\hat{W}^x(t) \le W^x(t)$ for every $t\geq0$. This pathwise ordering immediately yields the stochastic dominance $\hat{W}^x \preceq_{\mathrm{st}} W^x$.}
\end{figure}

\subsection{The natural coupling under state-dependent joining}
\label{subsec:failure}

The situation changes fundamentally when joining decisions depend on the observed workload.

Suppose that customers are more likely to join when the workload upon arrival is small. 
Consider an arrival time $T_n$ such that
\[
W^x(T_n-) = \hat{W}^x(T_n-).
\]
Assume that the admission policy rejects the $n$-th customer in the controlled system, while in the baseline system the customer joins and generates a positive jump.

Immediately after $T_n$, we then have
\[
\hat{W}^x(T_n) < W^x(T_n).
\]

Now consider the next arrival time $T_{n+1}$. 
If $T_{n+1}$ is close enough to $T_n$, then due to the previous rejection,  
\[
\hat{W}^x(T_{n+1}-) 
<
W^x(T_{n+1}-).
\]
If customers are more inclined to join when the workload is smaller, it may occur that the $(n+1)$-th customer joins in the controlled system but balks in the baseline system.

If the resulting service requirement is sufficiently large, the jump in $\hat{W}^x$ may exceed the level of $W^x$, leading to
\[
\hat{W}^x(T_{n+1})
>
W^x(T_{n+1}).
\]

Thus, as illustrated in Figure~2, the controlled workload may temporarily overtake the baseline workload.

\begin{figure}[H] \centering \begin{tikzpicture}

\draw[->] (0,0) -- (11,0) node[right] {$t$};
\draw[->] (0,0) -- (0,6) node[above] {$\textcolor{blue}{W^x_t}, \textcolor{red}{\hat{W}^x_t}$};

\fill[color = blue] (canvas cs: x = 2cm, y = 0cm) circle(2pt);
\fill[color = red] (canvas cs: x = 2cm, y = -0.1cm) circle(2pt);
\fill[color = blue] (canvas cs: x = 3cm,y = 0cm) circle (2pt);
\fill[color = blue] (canvas cs: x = 5cm, y = 0cm) circle (2pt);
\fill[color = red] (canvas cs: x = 5cm, y = -0.1cm) circle(2pt);
\fill[color = blue] (canvas cs: x= 7.5cm, y = 0cm) circle (2pt);
\fill[color = blue] (canvas cs: x = 10cm, y = 0cm) circle (2pt);
\fill[color = red] (canvas cs: x = 10cm, y = -0.1cm) circle(2pt);

\draw[line width=0.5mm, color = blue] (0cm, 3.1cm) -- (2cm, 1.1cm);
\draw[line width=0.5mm, color = blue, dashed] (2cm, 1.1cm) -- (2cm, 3.6cm);
\draw[line width=0.5mm, color = blue] (2cm, 3.6cm) -- (3cm, 2.6cm);
\draw[line width=0.5mm, color = blue, dashed] (3cm, 2.5cm) -- (3cm, 5cm);
\draw[line width=0.5mm, color = blue] (3cm, 5cm) -- (5cm, 3cm);
\draw[line width=0.5mm, color = blue] (5cm, 3cm) -- (7.5cm, 0.5cm);
\draw[line width=0.5mm, color = blue, dashed] (7.5cm, 0.5cm) -- (7.5cm, 4cm);
\draw[line width=0.5mm, color = blue] (7.5cm, 4cm) -- (10cm, 1.5cm);
\draw[line width=0.5mm, color = blue, dashed] (10cm, 1.5cm) -- (10cm, 3.5cm);
\draw[line width=0.5mm, color = blue] (10cm, 3.5cm) -- (10.5cm, 3cm);

\draw[line width=0.5mm, color = red] (0cm, 3cm) -- (2cm, 1cm);
\draw[line width=0.5mm, color = red, dashed] (2cm, 1cm) -- (2cm, 3.5cm);
\draw[line width=0.5mm, color = red] (2cm, 3.5cm) -- (5cm, 0.5cm);
\draw[line width=0.5mm, color = red, dashed] (5cm, 0.5cm) -- (5cm, 4cm);
\draw[line width=0.5mm, color = red] (5cm, 4cm) -- (9cm, 0cm);
\draw[line width=0.5mm, color = red] (9cm, 0cm) -- (10cm, 0cm);
\draw[line width=0.5mm, color = red, dashed] (10cm, 0cm) -- (10cm, 2cm);
\draw[line width=0.5mm, color = red] (10cm, 2cm) -- (10.5cm, 1.5cm);

\end{tikzpicture} \caption{\setstretch{1.3} Failure of pathwise dominance under state-dependent joining behavior. After an initial rejection in the controlled system, the workload $\hat{W}^x$ becomes strictly smaller than $W^x$, which alters the joining decision of a subsequent arrival. This customer joins the controlled system but balks in the uncontrolled one, generating a jump that causes $\hat{W}^x$ to exceed $W^x$. Blue dots on the time axis represent arrival epochs in the uncontrolled system, while red dots correspond to customers admitted to the controlled system.} \end{figure}

\subsection{Implications}

The above discussion shows that stochastic dominance
\[
\hat{W}^x \preceq_{\mathrm{st}} W^x
\]
cannot be taken for granted in the presence of state-dependent joining decisions.

Although admission control locally reduces workload by removing arrivals, it may indirectly alter future joining behavior in a manner that increases the workload trajectory. 
In the next section, we construct explicit counterexamples demonstrating that stochastic dominance may indeed fail.
\section{Counterexamples}\label{sec:counterexamples}

This section establishes that admission control is not necessarily effective in the sense of Definition~\ref{def:effective}. Namely, our objective is therefore to provide counterexamples that are fully consistent with the probabilistic assumptions introduced earlier and that rigorously demonstrate the possible failure of stochastic dominance. 
We begin by specifying the general probabilistic setting in which the counterexamples are constructed. Note that in all counterexamples, the  two systems are set in their natural coupling defined in $(\Omega,\mathcal{F},\mathbb{P})$.

\subsection{Model setup for the counterexamples}

Both counterexamples are formulated within a single-server queue with impatient customers, defined as follows.

\begin{definition}\label{def:FCFS}
Fix $\lambda_h>0$, $x>0$, a Borel function $\lambda:[0,\infty)\to[0,\lambda_h]$, and a distribution function $\Psi:[0,\infty)\times[0,\infty]\to[0,1]$.
The $\mathrm{M}_t/\mathrm{G}(\Psi)/1+\mathrm{H}(\Psi)$ queue with initial workload $x$ is defined by:
\begin{enumerate}
    \item A single server service system with  infinite waiting room operating under a first-come, first-served (FCFS) discipline.
    \item At time $t=0$, a single customer is present with remaining service time $x$.
    \item Arrivals follow an inhomogeneous Poisson process with rate $\lambda(\cdot)$.
    \item $(S_i,Y_i)_{i\ge1}$ is an IID\ sequence with joint distribution $\Psi$, independent of the arrival process. For each $i\ge1$, the $i$-th customer balks unless 
    \[
        Y_i \ge W^x_{T_i-},
    \]
    in which case the service time equals $S_i$.
\end{enumerate}
\end{definition}

This model fits into the general framework introduced in Section~\ref{sec:model} (recall Example~\ref{remark: example22}). 
In particular, any inhomogeneous Poisson process with rate $\lambda(\cdot)\le\lambda_h$ can be constructed by thinning a homogeneous Poisson process with rate $\lambda_h$ (Lewis and Shedler \cite{Lewis1979}), ensuring consistency with our primitive-model formulation. Specifically:
\begin{itemize}
    \item Let $(\hat{Z}_n)_{n\ge1}$ be an IID\ sequence with $\hat{Z}_1\sim \mathrm{U}[0,1]$.
    \item Let $(\hat{T}_n)_{n\ge1}$ denote the arrival times of a homogeneous Poisson process with rate $\lambda_h$.
    \item Assume independence between $(\hat{Z}_n)_{n\ge1}$ and the Poisson process.
    \item Retain the arrival at time $\hat{T}_n$ if and only if
    \[
        \hat{Z}_n \le \frac{\lambda(\hat{T}_n)}{\lambda_h}.
    \]
\end{itemize}
The resulting process is an inhomogeneous Poisson process with rate $\lambda(\cdot)$.

\begin{remark}
\normalfont
If the arrival process is homogeneous with constant rate $\lambda_h$, the system reduces to the baseline model. In this case, $\lambda(t) = \lambda_h$ for all $t \ge 0$, and the admission policy accepts every arriving customer almost surely.
\end{remark}

\begin{remark}
\normalfont
Under the FCFS discipline, $Y_i$ represents the maximal waiting time customer $i$ is willing to tolerate. Hence, conditional on the arrival epoch, the joining probability is nonincreasing in the observed workload. This monotonicity property will be crucial in the constructions below.
\end{remark}

\subsection{General construction}
In both counterexamples, we consider an $\mathrm{M}_t/\mathrm{G}(\Psi)/1+\mathrm{H}(\Psi)$ queue with initial workload $x=2$ with the natural coupling. 
The uncontrolled system operates at constant rate $\lambda_h$, whereas the controlled system suppresses arrivals on $[0,1)$ and admits arrivals only from time $t=1$ onward:
\[
\lambda(t)=
\begin{cases}
0, & 0\le t<1,\\
\lambda_h, & t\ge1.
\end{cases}
\]

Customer types $Y_1,Y_2,\ldots$ are IID\ with
\[
Y_n=
\begin{cases}
2, & \text{with probability } q,\\
1, & \text{with probability } 1-q,
\end{cases}
\qquad q\in(0,1)\qquad,\ n\geq1,
\]

and service times are deterministic functions of types:
\[
S_n\equiv\varphi(Y_n)\qquad,\ n\geq1.
\]
This specification is consistent with the entire model description provided in Section \ref{sec:model}. 

\subsection{Counterexample 1: large service of impatient customers} The first counterexample is based on the following idea. Suppose that more impatient customers require longer service times. We then construct a system whose uncontrolled version remains sufficiently congested over the interval $[0,1)$, so that only customers with short service requirements are admitted. In contrast, the admission control policy keeps the workload low at time $t=1$, thereby allowing a customer with a large service requirement to enter shortly afterward. Consequently, preventing early arrivals eliminates small contributions but creates the possibility of a substantial jump later on, ultimately worsening expected performance. This mechanism is illustrated in Figure~3(a).
\begin{theorem}[Counterexample 1] \label{thm: counterexample 1}
Assume that 
\[
\varphi(y)=
\begin{cases}
10, & y=1,\\
1,  & y=2.
\end{cases}
\]
Then there exist $q\in(0,1)$ and $\lambda_h\in(0,\infty)$ such that
\[
\hat{W}^2 \not\preceq_{\mathrm{st}} W^2 .
\]
\end{theorem}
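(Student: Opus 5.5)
The plan is to exhibit one explicit nonnegative, nondecreasing, measurable functional $f$ on $\mathcal{D}([0,\infty))$ with $\hat{\mathbb{E}}f(\hat W^2)>\mathbb{E}f(W^2)$. I will use
\[
f(u)=\mathbf{1}\{u(2)\ge 5\},
\]
which is clearly nondecreasing for $\preceq$. Under the natural coupling, both probabilities should be computable exactly. I expect the strict inequality to hold for every $q\in(0,1)$ and $\lambda_h>0$.

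\textbf{Step 1: a jump of size $10$ is needed.} Type-$1$ customers (patience $1$, service $10$) join only if the workload just before their arrival is at most $1$. Type-$2$ customers (patience $2$, service $1$) join only if it is at most $2$. Suppose no type-$1$ customer has joined by time $2$. Then every jump has size $1$ and happens from a level at most $2$, so the workload never exceeds $3$ on $[0,2]$, and in particular it is below $5$ at time $2$. Conversely, suppose a type-$1$ customer joins at some $s\le 2$. The workload just after $s$ is at least $10$, and it decreases at unit rate afterwards. Hence $u(2)\ge 10-(2-s)\ge 8\ge 5$. So in both systems, $\{u(2)\ge5\}$ is exactly the event that some type-$1$ customer joins during $[0,2]$.

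\textbf{Step 2: the controlled system.} There are no admissions on $[0,1)$, so $\hat W^2_t=2-t$ on $[0,1]$, and $\hat W^2_{t-}=2-t<1$ for $t\in(1,2)$. I then track the first arrival in $(1,2]$.
\begin{itemize}
\item If it is of type $1$, it joins, and the event occurs.
\item If it is of type $2$, it joins at some $s\in(1,2)$ and raises the workload to $3-s>1$. The workload then stays above $1$ until time $2$, so no later type-$1$ customer can join before time $2$.
\end{itemize}
Since arrivals are Poisson and types are i.i.d.\ and independent of the arrivals, this gives
\[
\hat{\mathbb P}(\hat W^2_2\ge5)=(1-e^{-\lambda_h})(1-q).
\]

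\textbf{Step 3: the uncontrolled system.} On $[0,1)$ the workload is at least $2-t>1$, so no type-$1$ customer can join there. The first type-$2$ arrival in $[0,1)$, at some time $t$, sees workload $2-t\le 2$ (earlier type-$1$ arrivals balk), so it joins and raises the workload to $3-t$. From then on the workload stays above $1$ through time $2$: it is $3-t-(r-t)=3-r$ at time $r$ without further joins, which exceeds $1$ for $r<2$, and further joins only raise it. So in that case the event fails. If no type-$2$ arrival occurs in $[0,1)$, then $W^2$ coincides with $\hat W^2$ on $[0,1]$. By the independent-increments property of the Poisson process on $(1,2]$, the argument of Step 2 then applies verbatim. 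The type-$2$ arrivals form a thinned Poisson process of rate $\lambda_h q$, and the number of them in $[0,1)$ is independent of what happens on $(1,2]$. Therefore
\[
\mathbb P(W^2_2\ge5)=e^{-\lambda_h q}(1-e^{-\lambda_h})(1-q)<\hat{\mathbb P}(\hat W^2_2\ge5),
\]
which proves $\hat W^2\not\preceq_{\mathrm{st}}W^2$.

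\textbf{Main obstacle.} The delicate part is the bookkeeping at boundary times: the moment $t=1$, where $\hat W^2$ equals exactly $1$, and whether the arrival-time conventions use $\le$ or $<$. I will handle these by noting that they are probability-zero events for Poisson arrivals. I will also make explicit the conditional independence used to factor the uncontrolled probability in Step 3.
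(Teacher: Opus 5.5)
Your proof is correct, but it follows a different route from the paper's.

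The paper tests dominance with $f(u)=u(2)$. It uses the natural coupling to discard the event $E$ (nobody joins the uncontrolled queue during $[0,1]$), on which the two paths coincide from time $1$ on. On $\bar E$ it then bounds $W_2^2\le 2$ and $\mathbb{E}(\hat W_2^2\mid\bar E)\ge 10(1-e^{-\lambda_h})(1-q)$, and finally sends $q\downarrow0$ and $\lambda_h\to\infty$.

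You instead use $f(u)=\mathbf{1}\{u(2)\ge5\}$. You identify $\{u(2)\ge5\}$ in each system with the event that a type-1 customer gets in by time $2$, and you compute both marginal probabilities exactly; they differ by the factor $e^{-\lambda_h q}<1$. This buys you two things:
\begin{enumerate}
\item The coupling plays no real role, since first-order dominance depends only on the two marginal laws and you compute each one separately.
\item You get ineffectiveness for every $q\in(0,1)$ and every $\lambda_h>0$.
\end{enumerate}
With $f(u)=u(2)$ such uniformity is impossible. On $\bar E$ one has $W_2^2\ge1$, while $\hat W_2^2\le 10\,\mathbf{1}\{\text{some arrival in }(1,2]\}$. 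So once $10(1-e^{-\lambda_h})<1$ the mean inequality actually reverses.

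What the paper's choice buys in return is a more interpretable statement: for suitable parameters, admission control strictly increases the \emph{mean} workload at time $2$. Your tail-probability comparison does not give that.
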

\textbf{Proof:} It suffices to show that
\[
\mathbb{E}W_2^2 < \mathbb{E}\hat{W}_2^2 .
\]
For convenience, we split the proof into several steps.\newline\newline\textit{Step 1: Dynamics on $[0,1)$}
.\newline
Since $W_0^2=2$ and the service rate is one, we have
\[
W_t^2 = 2-t \quad \text{as long as no arrival joins.}
\]
Hence $W_t^2>1$ for all $t\in[0,1)$.

Because customer types are supported on $\{1,2\}$ and a customer joins only if 
$Y_i \ge W_{t-}^2$, it follows that during $[0,1)$ only customers with
type $2$ may join the uncontrolled system.

Define an event 
\[
E=\{\text{no customer joins the uncontrolled queue during }[0,1]\}.
\]
On $E$ we have $W_1^2=\hat{W}_1^2=1$.
From time $t=1$ onward both systems evolve according to the same arrival process
and start from the same workload level. Hence,
\[
W_2^2=\hat{W}_2^2 \quad \text{on } E,
\]
and consecutively, it is sufficient to show that 
\begin{equation*}
    \mathbb{E}\left(W_2^2|\bar{E}\right)<\mathbb{E}\left(\hat{W}_2^2|\bar{E}\right).
\end{equation*}
\medskip
\noindent
\textit{Step 2: Pathwise analysis on $\bar{E}$.}\newline
Suppose a Type-2 customer arrives at some time $t<1$.
Immediately before arrival the workload is greater than one, that is, $W_{t-}^2>1$, hence the customer joins and
the workload jumps to
\[
W_t^2 = W_{t-}^2 + 1 > 2 .
\]

After this jump the workload decreases at rate one.
Since $W_t^2>2$ and the time remaining until $1$ is $1-t<1$,
the workload cannot fall below $2$ before time $t=1$.
Consequently no further customer can join before time $t=1$.

Therefore, on $\bar{E}$ exactly one customer joins during $[0,1)$,
and
\[
W_1^2 = 2 ,
\qquad
\hat{W}_1^2 = 1 .
\]

\medskip
\noindent
\textit{Step 3: Upper bound for the uncontrolled system.}\newline
On $\bar{E}$, the uncontrolled system satisfies $W_1^2=2$.
Between times $1$ and $2$, even if a new arrival occurs,
the admission rule implies that the workload cannot exceed $2$
at time $t=2$.
Indeed, if no arrival joins during $[1,2]$, then $W_2^2=1$.
If a Type-2 customer joins, the workload jumps above $2$
but has less than one unit of time to decrease before time $t=2$,
and therefore remains bounded by $2$ at time $t=2$.

Hence,
\[
W_2^2 \le 2 \quad \text{on } \bar{E},
\]
and therefore
\begin{equation}\label{eq:upperbound}
\mathbb{E}\left(W_2^2|\bar{E}\right) \le 2 .
\end{equation}

\medskip
\noindent
\textit{Step 4: Lower bound for the controlled system.}\newline
Let $c$ be the first customer to arrive after $t=1$. On $\bar{E}$ we have $\hat{W}_1^2=1$ and hence if $c$ arrives before $t=2$ he will join the queue (since his type $Y\geq1$). Especially, if he is of Type~1,
then he will join 
and the workload jump by $\varphi(1)=10$.
In that case no additional customer can join before time $t=2$,
and thus
\[
\hat{W}_2^2= 10 .
\]

The probability that $c$ arrives before $t=2$ 
and that he is of Type~1 equals
\[
(1-e^{-\lambda_h})(1-q).
\]

Since $\hat{W}_2^2\ge0$ in all cases,
we obtain
\begin{equation}\label{eq:lowerbound}
\mathbb{E}\left(\hat{W}_2^2|\bar{E}\right)
\ge 10 (1-e^{-\lambda_h})(1-q).
\end{equation}

\medskip
\noindent
\textit{Step 5: Choice of parameters.}

Let $q\downarrow0$ and $\lambda_h\to\infty$.
Then the lower bound in \eqref{eq:lowerbound} converges to $10$,
whereas \eqref{eq:upperbound} yields
\[
\mathbb{E}\left(W_2^2|\bar{E}\right) \le 2.
\]
Hence, for suitable choices of $q$ and $\lambda_h$,
\[
\mathbb{E}\left(W_2^2|\bar{E}\right) < \mathbb{E}\left(\hat{W}_2^2|\bar{E}\right),
\]
which implies
\[
\hat{W}^2 \not\preceq_{\mathrm{st}} W^2 . \ \ \blacksquare
\]
\subsection{Counterexample 2: small but repeated contributions} The second counterexample illustrates a different mechanism. In this case, the uncontrolled system is again initially congested, which prevents most arrivals during the early interval. However, under the admission control policy, the workload is kept low, allowing several smaller customers to enter shortly after the control is lifted. The cumulative effect of these arrivals leads to a higher workload than in the uncontrolled system, demonstrating that even multiple small contributions can reverse the expected ordering. This phenomenon is illustrated in Figure~3(b).
\begin{theorem}[Counterexample 2]
Assume that 
\[
\varphi(y)=
\begin{cases}
0.9, & y=1,\\
1,  & y=2.
\end{cases}
\]
Then there exist $q\in(0,1)$ and $\lambda_h\in(0,\infty)$ such that
\[
\hat{W}^2 \not\preceq_{\mathrm{st}} W^2 .
\]
\end{theorem}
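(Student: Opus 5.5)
The plan follows the template of the proof of Theorem~\ref{thm: counterexample 1}. We use the natural coupling, the functional $f(w)=w(2)$ (nonnegative and nondecreasing), and aim to show
\[
\mathbb{E}W_2^2<\mathbb{E}\hat W_2^2 .
\]

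\emph{Step 1 (reduction).} Steps~1--2 of the proof of Theorem~\ref{thm: counterexample 1} use only $\varphi(2)=1$, $W_0^2=2$ and the type support $\{1,2\}$, so they carry over verbatim. Let $E$ be the event that no customer joins the uncontrolled queue during $[0,1]$. On $E$ both systems coincide from time $1$ on, so they agree at time $2$. On $\bar E$, exactly one type-2 customer joins in $[0,1)$. If it joins at time $t$, the workload afterwards is $3-s$, hence $W_1^2=2$, while $\hat W_1^2=1$. The event $\bar E$ is determined by arrivals and types on $[0,1)$. By independent Poisson increments and the IID types, the arrivals and types on $[1,2]$ are, conditionally on $\bar E$, still a rate-$\lambda_h$ Poisson stream with IID types. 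It therefore suffices to prove $\mathbb{E}(W_2^2\mid\bar E)<\mathbb{E}(\hat W_2^2\mid\bar E)$.

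\emph{Step 2 (upper bound, uncontrolled).} On $\bar E$ we have $W_s^2=3-s>1$ on $[1,2)$ as long as nobody joins, so type-1 customers balk throughout $[1,2)$. Let $B$ be the event that no type-2 customer arrives in $[1,2]$. On $B$ we get $W_2^2=1$. On $\bar B$ we get $W_2^2\le 2$, by the same argument as Step~3 of Theorem~\ref{thm: counterexample 1}. Hence
\[
\mathbb{E}(W_2^2\mid\bar E)\le 1+\mathbb{P}(\bar B)\le 1+q\lambda_h .
\]

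\emph{Step 3 (lower bound, controlled).} On $\bar E$ we start from $\hat W_1^2=1$ with $\hat W_s^2=2-s\le 1$, so every arrival joins. Restrict to the event $C$ on which all arrivals in $[1,2]$ are of type 1, and at least one arrival falls in $(1,1.9)$ and at least one in $(1.9,2)$. Then
\[
\mathbb{P}(C)\ge e^{-q\lambda_h}\bigl(1-e^{-0.9\lambda_h}\bigr)\bigl(1-e^{-0.1\lambda_h}\bigr).
\]
On $C$, the first arrival $\tau_1\in(1,1.9)$ joins and lifts the workload to $2.9-\tau_1$, which stays above $1$ until exactly time $1.9$ regardless of $\tau_1$. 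Hence nobody else joins before $1.9$. The first arrival $\tau_2\in(1.9,2)$ then sees workload $2.9-\tau_2\le 1$, joins, and the workload becomes $3.8-\tau_2>1$ until time $2.8$. No further joins occur in $[1,2]$, and so
\[
\hat W_2^2=3.8-\tau_2-(2-\tau_2)=1.8 .
\]
Since $\hat W_2^2\ge 0$, we get $\mathbb{E}(\hat W_2^2\mid\bar E)\ge 1.8\,\mathbb{P}(C)$.

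\emph{Step 4 (parameters).} Take $q=\lambda_h^{-2}$ and let $\lambda_h\to\infty$. The upper bound tends to $1$ and the lower bound tends to $1.8$, so for suitably large $\lambda_h$ the required strict inequality holds, and hence $\hat W^2\not\preceq_{\mathrm{st}} W^2$.

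The main obstacle is the pathwise bookkeeping in Step~3. Restricting to type-1 arrivals avoids a type-2 join, which would shift the hitting time of level $1$ and spoil the second join; the cost $e^{-q\lambda_h}$ of this restriction is what forces $q\lambda_h\to0$. The rest requires only routine care about conditioning on $\bar E$, which Step~1 handles.
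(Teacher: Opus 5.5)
Your plan matches the paper's proof. You reduce to $\bar E$, where $W_1^2=2$ and $\hat W_1^2=1$. You observe that at most one unit of work can enter the uncontrolled queue during $[1,2]$, so $\mathbb{E}(W_2^2\mid\bar E)=2-e^{-q\lambda_h}\le 1+q\lambda_h$. You then produce two joins of size $0.9$ in the controlled queue and take $q=\lambda_h^{-2}$.

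Your pathwise analysis on $C$ is correct, but the bound
\[
\mathbb{P}(C)\ge e^{-q\lambda_h}\bigl(1-e^{-0.9\lambda_h}\bigr)\bigl(1-e^{-0.1\lambda_h}\bigr)
\]
is false. It treats the event ``all arrivals in $[1,2]$ are of type $1$'' as independent of the arrival counts. In fact, demanding arrivals in both subintervals makes the count larger and an all-type-$1$ outcome less likely, so the inequality goes the wrong way. Split the stream, as the paper does, into independent type-$1$ and type-$2$ Poisson processes with rates $(1-q)\lambda_h$ and $q\lambda_h$. This gives the exact value
\[
\mathbb{P}(C)=e^{-q\lambda_h}\bigl(1-e^{-0.9(1-q)\lambda_h}\bigr)\bigl(1-e^{-0.1(1-q)\lambda_h}\bigr),
\]
which is strictly smaller than your expression whenever $q>0$. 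Since $(1-q)\lambda_h=\lambda_h-\lambda_h^{-1}\to\infty$ when $q=\lambda_h^{-2}$, this still tends to $1$. So Step~4 goes through unchanged once the formula is corrected.

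The only other difference from the paper is the choice of event. The paper asks only that the first arrival in $(1,1.1)$ and the first arrival in $(1.9,2)$ be of type $1$, which has probability $(1-q)^2(1-e^{-0.1\lambda_h})^2$. On that event, a type-$2$ customer may join during $(\tau_1,1.9)$ in place of the second type-$1$ customer, or may join after $\tau_2$. Such a customer brings a full unit of work and the server stays busy on $[1,2]$, so one still gets $\hat W_2^2\ge 1.8$, though not equality. Excluding type-$2$ arrivals makes your path exact at the cost of the factor $e^{-q\lambda_h}$.

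Contrary to your closing remark, that factor is not what forces $q\lambda_h$ to be small. The uncontrolled side already does, since $2-e^{-q\lambda_h}$ must lie below a controlled lower bound that never exceeds $1.8$.
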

\textbf{Proof:} As in Counterexample~1, it suffices to show
\[
\mathbb{E}(W_2^2 \mid \bar{E})
<
\mathbb{E}(\hat{W}_2^2 \mid \bar{E}),
\]
where on $\bar{E}$ we have $W_1^2=2$ and $\hat{W}_1^2=1$. Note that from now on, all the arguments to follow will refer to sample space realization belonging to $\bar{E}$. For convenience, we split the rest of the proof into several steps. \newline\newline
\textit{Step 1: Uncontrolled system.}\newline
Between times $1$ and $2$ we have
\[
W_t^2 = 2-(t-1)
\]
as long as no arrival joins.

Thus $W_t^2>1$ for all $t<2$.
Hence only Type-2 customers may join.

The Type-2 arrivals form a Poisson process
with rate $\lambda_h q$.

If no such arrival occurs in $[1,2]$, then $W_2^2=1$.
If one such arrival occurs, then the workload jumps by $1$,
and by the same argument as in Counterexample~1,
no further admission is possible before time $t=2$.
In that case $W_2^2=2$.

Therefore, collectively these arguments imply that 
\begin{equation}\label{eq: inequality 11}
\mathbb{E}(W_2^2 \mid \bar{E})
=
e^{-\lambda_h q}\cdot 1
+
(1-e^{-\lambda_h q})\cdot 2
=
2 - e^{-\lambda_h q}.    
\end{equation}
\textit{Step 2: Controlled system.}\newline At time $t=1$ we have $\hat{W}_1^2=1$.

Note that the Type-1 arrival process is Poisson with rate $\lambda_h(1-q)$. Thus, the probability that at the sub-interval $(1,1.1)$ (or $(1.9,2))$, there is at least one arrival such that the first arrival is of type-1, equals
\[
(1-q)\left(1-e^{-0.1\lambda_h}\right).
\]
Since arrivals in  disjoint intervals (e.g., $(1,1.1)$ and $(1.9,2)$) are independent,
the probability that both of them
contain a Type-1 arrival is equal to
\[
(1-q)^2\left(1 - e^{-0.1\lambda_h}\right)^2.
\]

On this event, at least two Type-1 customers join,
each demanding contributing $0.9$ units of service.
Hence, we derive the following lower bound (in fact, it is an equality but for our future purposes, the bound is sufficient)
\[
\hat{W}_2^2 \ge 1.8.
\]

Therefore,
\begin{equation}\label{eq: inequality22}
\mathbb{E}(\hat{W}_2^2 \mid \bar{E})
\ge
1.8(1-q)^2 \left(1 - e^{-0.1\lambda_h}\right)^2.    
\end{equation}
\textit{Step 3: Choice of Parameters.}\newline
Choose $q\equiv q(\lambda_h)\equiv\lambda_h^{-2}$.
Then, $q\to0$ and $\lambda_h q \to 0$
as $\lambda_h\to\infty$.

Hence, with the help of \eqref{eq: inequality 11} and \eqref{eq: inequality22}, deduce that
\[
\lim_{\lambda_h\uparrow\infty}\mathbb{E}(W_2^2 \mid \bar{E}) =1,
\]
while
\[
\liminf_{\lambda_h\uparrow\infty}\mathbb{E}(\hat{W}_2^2 \mid \bar{E})=1.8.
\]
Thus, for sufficiently large $\lambda_h$ (and $q=\lambda_h^2$), we establish the following inequality
\[
\mathbb{E}(W_2^2 \mid \bar{E})
<
\mathbb{E}(\hat{W}_2^2 \mid \bar{E}),
\]
as required. $\blacksquare$\newline
\begin{figure}[H]
\centering
\begin{minipage}{0.48\textwidth}
\centering
\begin{tikzpicture}[x=3cm,y=1cm]  
    \draw[thick,->] (0,0) -- (1.5,0) node[anchor=north west]{$t$};
    \draw[thick,->] (0,0) -- (0,4) node[anchor=north east]{$\textcolor{blue}{W^x_t}, \textcolor{red}{\hat{W}^x_t}$};

    \draw[blue, line width=1pt] (0,2) -- (1,1);
    \node[left] at (0,2) {$2$};

    \draw[red, line width=1pt] (0,1) -- (0.5,0.5);
    \node[left] at (0,1) {$1$};

    \draw[red, dashed] (0.5,0.5)--(0.5,4);
    \draw[red, line width=1pt] (0.5,4)--(1,3.5);

    \draw[dashed] (1,0)--(1,3.5);
    \draw[green,dashed] (0,1)--(1,1);
    \node at (1,-0.2) {$2$};

    \node at (0,-0.2) {$1$};
\end{tikzpicture}

\small (a) Counterexample 1
\end{minipage}
\hfill
\begin{minipage}{0.48\textwidth}
\centering
\begin{tikzpicture}[x=3cm,y=1cm]  
    \draw[thick,->] (0,0) -- (1.5,0) node[anchor=north west]{$t$};
    \draw[thick,->] (0,0) -- (0,4) node[anchor=north east]{$\textcolor{blue}{W^x_t}, \textcolor{red}{\hat{W}^x_t}$};

    \draw[blue, line width=1pt] (0,2) -- (1,1);
    \node[left] at (0,2) {$2$};

    \draw[red, line width=1pt] (0,1) -- (0.05,0.95);
    \node[left] at (0,1) {$1$};

    \draw[red, dashed] (0.05,0.95)--(0.05,1.85);
    \draw[red, line width=1pt] (0.05,1.85)--(0.95,0.95);

    \draw[red, dashed] (0.95,0.95)--(0.95,1.85);
    \draw[red, line width=1pt] (0.95,1.85)--(1,1.8);

    \draw[dashed] (1,0)--(1,1.8);
    \draw[green,dashed] (0,1)--(1,1);
    \node at (1,-0.2) {$2$};

    \node at (0,-0.2) {$1$};
\end{tikzpicture}

\small (b) Counterexample 2
\end{minipage}

\caption{\setstretch{1.3} Illustration of the typical sample path behavior of the uncontrolled (blue) and controlled (red) workload processes on the interval $[1,2]$, on the event $\bar{E}$. 
Panels (a) and (b) depict Counterexamples~1 and~2, respectively. 
For clarity, the dashed green line indicates the level~1 threshold.
}
\label{fig:counterexamples}
\end{figure}
\section{Effectiveness under independence}\label{sec:effectiveness_independence}

Recall Counterexamples~1 and~2 from Section~\ref{sec:counterexamples}. In both cases we showed that
\[
\mathbb{E}W_2^2 < \hat{\mathbb{E}}\hat{W}_2^2.
\]
Since the initial workload in both examples is $x=2$, the time $t=2$ belongs to the initial busy period even if no arrivals occur during $(0,2)$. Consequently, these counterexamples demonstrate that the uncontrolled workload process does not stochastically dominate the controlled workload process over the initial busy period.

Another important feature of both counterexamples is the dependence between the type and the service time of each arriving customer. This naturally raises the following question: can such a counterexample be constructed when the type and the service time are independent?

More precisely, we ask whether it is possible to construct a model satisfying the following two properties:
\begin{enumerate}
    \item[(\textbf{P1})] The uncontrolled workload process fails to stochastically dominate the controlled workload process over the initial busy period.
    \item[(\textbf{P2})] $\Psi(\cdot)\equiv\Psi(\cdot;y)$ is constant in $y\in\mathcal{Y}$, that is, for each arriving customer the type and the service time are independent.
\end{enumerate}

In the remainder of this section we introduce the notion of initial busy-period effectiveness and claim that no model can satisfy both \textbf{P1} and \textbf{P2}. Then, we explain how to apply this result in order to derive stability conditions in state-dependent queues. 

\subsection{Initial busy-period effectiveness}

For each system (controlled and uncontrolled), define the workload process absorbed at zero by considering the stopped processes
\[
V_t^x \equiv W_{t \wedge \tau_x}^x,
\qquad
\hat{V}_t^x \equiv \hat{W}_{t \wedge \hat{\tau}_x}^x,
\qquad t\ge0,
\]
where
\[
\tau_x \equiv \inf\{t\ge0 : W_t^x=0\},
\qquad
\hat{\tau}_x \equiv \inf\{t\ge0 : \hat{W}_t^x=0\}
\]
denote the respective hitting times of $W^x$ and $\hat{W}^x$ at $\{0\}$.

We now introduce the notion of initial busy-period effectiveness.

\begin{definition}\label{def:initial_busy_period_effectiveness}
The admission control policy implemented in $\hat{W}^x$ is said to be \emph{initially busy-period effective} if
\[
\hat{V}^x \preceq_{\mathrm{st}} V^x,
\]
that is, if for every nonnegative, nondecreasing measurable functional 
$f : \mathcal{D}([0,\infty)) \to [0,\infty)$,
\[
\hat{\mathbb{E}}f(\hat{V}^x)
\le
\mathbb{E}f(V^x).
\]
Otherwise, the policy is said to be \emph{initially busy-period ineffective}.
\end{definition}

\begin{remark}\normalfont
Initial busy-period effectiveness, as stated in Definition~\ref{def:initial_busy_period_effectiveness}, is weaker than the notion of effectiveness introduced in Definition~\ref{def:effective}. Observe that in Counterexamples~1 and~2 described in Section~\ref{sec:counterexamples}, the admission control policy is initially busy-period ineffective.
\end{remark}

\begin{remark}\normalfont
Note that the above definition does not require any stability condition ensuring that $\tau_x$ and $\hat{\tau}_x$ have finite mean, or even that they are finite almost surely. This flexibility will be important for the application discussed in Section~\ref{subsec: applications}.
\end{remark}

\subsection{The impossibility result}

We now assume that, for each arriving customer, the type and the service time are independent. Under this assumption we show that the admission control policy is always initially busy-period effective.

The proof relies on an explicit coupling construction under which
\[
\hat{V}^x_t \le V^x_t
\quad \text{a.s. for all } t\ge0 .
\]
The full details of the  proof are provided in Section~\ref{sec: proof}.

\begin{theorem}\label{thm:coupling}
Suppose that \textbf{(P2)} holds. Then \textbf{(P1)} is violated. In other words, if \textbf{(P2)} holds, then the admission control policy implemented in $\hat{W}^x$ is initially busy-period effective, that is,
\[
\hat{V}^x \preceq_{\mathrm{st}} V^x .
\]
\end{theorem}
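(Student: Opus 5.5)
The plan is to build an explicit coupling of the two systems on one probability space under which $\hat V^x_t\le V^x_t$ for all $t\ge0$ almost surely. Stochastic dominance in the sense of Definition~\ref{def:initial_busy_period_effectiveness} then follows at once, since $f$ is nondecreasing and so $f(\hat V^x)\le f(V^x)$ pathwise.

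The key use of \textbf{(P2)} is that service requirements need not be attached to arrival indices; they can be attached to \emph{joining order}. The space carries:
\begin{itemize}
\item one arrival sequence $\mathcal T=\hat{\mathcal T}$, which is legitimate by \textbf{($\hat{\text{A}}$0)};
\item one IID type sequence $(Y_n)$ with law $H$, used by both systems, so $\hat Y_n=Y_n$;
\item one IID uniform sequence $(\hat Z_n)$;
\item one IID sequence $(\sigma_k)_{k\ge1}$ with law $\Psi$.
\end{itemize}
All four are mutually independent. In each system, the $k$-th customer who actually joins (in the controlled system: with $\hat I_n\hat J_n=1$) receives service requirement $\sigma_k$. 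The workloads are then defined recursively over arrival epochs, with $I_n=\Phi(Y_n,W^x_{T_n-})$, $\hat I_n=\Phi(Y_n,\hat W^x_{T_n-})$, and $\hat J_n$ given by the same policy as before, a function of the controlled history and $\hat Z_n$. Customers who balk or are rejected are given an independent dummy $\Psi$-distributed requirement, so that every $S_n,\hat S_n$ is defined.

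The first step, which I expect to be the main technical obstacle, is to check that this construction reproduces the correct laws, i.e.\ that \textbf{(A1)}--\textbf{(A3)} and \textbf{($\hat{\text{A}}$1)}--\textbf{($\hat{\text{A}}$5)} hold for the coupled processes. The idea is as follows. The joining indicator of customer $n$ is measurable with respect to the past plus $(Y_n,\hat Z_n)$. The index $k$ of the next joiner is determined by the past. Hence $\sigma_k$ (or the dummy requirement) is independent of $\sigma(\text{past},Y_1,\dots,Y_n,\hat Z_1,\dots,\hat Z_n)$ and has law $\Psi=\Psi(\cdot;Y_n)$, which is exactly what \textbf{(A3)} and \textbf{($\hat{\text{A}}$3)} require under \textbf{(P2)}. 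Since the law of each workload process is determined by these conditional specifications, we obtain $\mathbb P_{V^x}$ and $\hat{\mathbb P}_{\hat V^x}$ as in the original models. Some care is needed to make this conditional-independence argument rigorous, most naturally by induction over $n$ conditionally on $\sigma(\mathcal T)$.

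The second step is a pathwise comparison. Let $N(t)$ and $\hat N(t)$ be the numbers of joiners by time $t$, and set $\rho\equiv\tau_x\wedge\hat\tau_x$. On $[0,\rho)$ neither system has idled, so
\[
W^x_t=x+\sum_{k\le N(t)}\sigma_k-t,\qquad \hat W^x_t=x+\sum_{k\le \hat N(t)}\sigma_k-t .
\]
Since the $\sigma_k$ are positive, it suffices to show $\hat N(t)\le N(t)$ on $[0,\rho)$, which I would prove by induction over arrival epochs.
\begin{itemize}
\item If $\hat N(T_n-)<N(T_n-)$, then $\hat N(T_n)\le \hat N(T_n-)+1\le N(T_n)$.
\item If $\hat N(T_n-)=N(T_n-)$, the displayed formulas give $\hat W^x_{T_n-}=W^x_{T_n-}$. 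The shared type then yields $\hat I_n=I_n$, and $\hat I_n\hat J_n\le I_n$, so again $\hat N(T_n)\le N(T_n)$.
\end{itemize}
Notably, no monotonicity of $\Phi$ is required; the equality case is handled by sharing types and joining-order service times.

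Finally, the comparison extends to all $t\ge0$. If $\hat\tau_x\le\tau_x$, then $\hat V^x_t=0\le V^x_t$ for $t\ge\hat\tau_x$. The case $\tau_x<\hat\tau_x$ is impossible, because $\hat W^x\le W^x$ on $[0,\tau_x)$ forces $\hat W^x$ to hit $0$ no later than $W^x$. Thus $\hat V^x_t\le V^x_t$ for all $t$, and hence $\hat V^x\preceq_{\mathrm{st}}V^x$.
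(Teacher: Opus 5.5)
Your proposal is correct and is essentially the paper's own coupling: common arrival epochs, the $k$-th customer to join the controlled queue receives the service requirement of the $k$-th customer to join the uncontrolled queue (your $\sigma_k$ plays the role of the paper's $S_{n_k}$), and types are shared whenever the two workloads coincide, which is what forces $\hat I_n=I_n$ in the equality case. The differences are minor, valid simplifications: you share types unconditionally instead of drawing the fresh $\tilde{Y}_n$ the paper uses when $\hat V^x_{T_n-}<V^x_{T_n-}$, and you make the pathwise comparison explicit through the invariant $\hat N(t)\le N(t)$.
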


In the remainder of this section we discuss an application of Theorem~\ref{thm:coupling}.

\subsection{Stability of state-dependent queues}\label{subsec: applications}

We illustrate the usefulness of Theorem~\ref{thm:coupling} by deriving sufficient conditions for the stability of state-dependent queues with impatient customers. The model considered here is a state-dependent version of the $\mathrm{M}/\mathrm{G}/1+\mathrm{H}$ queue.

\begin{definition}\label{def:FCFS}
Fix $\lambda_h>0$ and Borel function $(\ell,w)\mapsto\lambda_{\ell,w}(\cdot)$ such that $\lambda_{\ell,w}\le\lambda_h$ for every $\ell=0,1,2,\ldots$ and $w\ge0$. Let $G(\cdot)$ and $H(\cdot)$ be probability distributions on $(0,\infty)$ and $[0,\infty]$, respectively. The $\mathrm{M}_{\ell,w}/\mathrm{G}/1+\mathrm{H}$ queue is defined as follows:
\begin{enumerate}
    \item A single server service system with infinite waiting room operating under a first-come, first-served (FCFS) discipline.
    
    \item At time $t=0$ the system is empty.
    
    \item Arrivals follow a state-dependent Poisson process with rate $\lambda_{\ell,w}$, where $\ell$ denotes the queue length and $w$ denotes the workload.
    
    \item $(S_i,Y_i)_{i\ge1}$ is an IID\ sequence such that $S_1\sim G(\cdot)$ and $Y_1\sim H(\cdot)$ are independent. For each $i\ge1$, the $i$-th customer balks unless
    \[
    Y_i \ge W_{T_i-},
    \]
    in which case the service time equals $S_i$.
\end{enumerate}
\end{definition}

We are interested in sufficient conditions ensuring that the $\mathrm{M}_{\ell,w}/\mathrm{G}/1+\mathrm{H}$ queue is stable. In particular, note that the workload process regenerates at the end of every busy period, that is, whenever the workload hits the origin from above. Hence it suffices to identify conditions under which the expected length of the first busy period is finite.

Note that the first customer arrives to an empty system and therefore necessarily joins. Conditioning on his service requirement yields a special case of the model described in Section~\ref{sec:model}. The arrival process can be constructed by thinning a homogeneous Poisson process with rate $\lambda_h$ as follows:

\begin{itemize}
\item Let $(\hat{Z}_n)_{n\ge1}$ be an IID\ sequence with $\hat{Z}_1\sim\mathrm{U}[0,1]$.

\item Let $(\hat{T}_n)_{n\ge1}$ denote the arrival times of a homogeneous Poisson process with rate $\lambda_h$.

\item Assume independence between $(\hat{Z}_n)_{n\ge1}$ and the Poisson process.

\item If the arrival at time $\hat{T}_n$ occurs when the queue length is $\ell$ and the workload is $w$, retain it if and only if
\[
\hat{Z}_n \le \frac{\lambda_{\ell,w}}{\lambda_h}.
\]
\end{itemize}

Since the service and patience times are independent, Theorem~\ref{thm:coupling} applies. Consequently, the expected length of the busy period in the $\mathrm{M}_{\ell,w}/\mathrm{G}/1+\mathrm{H}$ system is finite whenever the corresponding busy period in the state-invariant $\mathrm{M}/\mathrm{G}/1+\mathrm{H}$ queue with arrival rate $\lambda_h$ has finite expectation.

It is well known (see, e.g., Baccelli, Boyer and H\'ebuterne \cite{Baccelli1984}) that for the state-invariant system this holds whenever
\begin{equation}\label{eq:sufficient_condition}
\lambda_h\int_0^\infty y\,{\rm d}G(y)\,
\lim_{y\uparrow\infty}\bigl[1-H(y)\bigr]
<1.
\end{equation}

One may also consider a more general framework in which the arrival rate is both state-dependent and time-varying, that is, for every $\ell$ and $w$ there exists an arrival rate function $\lambda_{\ell,w}(\cdot)$ which is bounded from above by $\lambda_h$. If $\lambda_{0,0}(\cdot)$ is periodic, the workload process remains regenerative with regeneration occurring at the first period during which the system is empty.

To show that condition \eqref{eq:sufficient_condition} implies that the expected cycle length is finite, one may apply the same arguments as in the proof of Theorem~3 in Bodas and Jacobovic \cite{Bodas2024}. Additional applications of Theorem~\ref{thm:coupling} can be obtained using the same methodology as in Section~1.3 of \cite{Bodas2024}. To keep the manuscript concise, we omit the details.

\section{Proof of Theorem~\ref{thm:coupling}}\label{sec: proof}

The proof of Theorem \ref{thm:coupling} is divided into two parts. 
First, we provide a coupling construction that places the baseline system 
and the controlled system on a common probability space 
$(\Omega,\mathcal{F},\mathbb{P})$. Since, as to be explained,  the pointwise dominance of 
$V^x$ over $\hat{V}^x$ follows quite immediately from this construction, 
the second part is devoted to verifying that the marginal distributions 
of the workload processes of both systems coincide with those defined in Section~\ref{sec:model}.

\subsection{Coupling construction}

We construct both workload processes on a common probability space $(\Omega,\mathcal{F},\mathbb{P})$. 
On this space, the random variables $\mathcal{T}\equiv(T_n)_{n=1}^\infty$ and  
\begin{equation*}
    \{S_n,Y_n\,;\,n\ge1\}
\end{equation*}
represent the primitives of the uncontrolled system. 

In addition, let $(\tilde{Y}_n)_{n\ge1}$ and $(\hat{Z}_n)_{n\ge1}$ be two sequences such that conditionally on $\mathcal{T}$ are both IID and distributed according to $H(\cdot)$ and the uniform distribution on the unit interval, respectively. 
Conditionally on $\mathcal{T}$, the three sequences
\begin{equation}\label{eq: independence}
   \{S_n,Y_n\,;\,n\ge1\}, \qquad (\tilde{Y}_n)_{n\ge1}, \qquad (\hat{Z}_n)_{n\ge1}
\end{equation}
are assumed to be mutually independent.

We construct $V^x$ consistently with the Assumptions~\textbf{(A1)-(A3)} from 
\begin{equation*}
  \{S_n,Y_n,T_n\,;\,n\ge1\}.  
\end{equation*}
We now construct $\hat{V}^x$ inductively so that
\[
\hat{V}^x_t \le V^x_t 
\quad \text{for all } t\ge0 \quad \mathbb{P}\text{-a.s.}
\]

To begin with, for each $n\ge1$, $\hat{Z}_n$ is the randomization variable used by the dispatcher in the decision regarding the $n$-th arriving customer. 
In addition, assume that the exogenous assumptions~\textbf{($\hat{\text{A}}$2)}, and \textbf{($\hat{\text{A}}$5)} regarding the customers' balking decisions and the dispatcher's admission decisions in the controlled system are satisfied.

For the construction we will need the following notation. 
For each $k \ge 1$, let $n_k$ denote the arrival index of the $k$-th customer who \textit{joins} the uncontrolled queue; that is, $n_k$ is the (random) index such that the customer with arrival number $n_k$ is the $k$-th to join the uncontrolled system.

Note that $\hat{V}^x$ is absorbed at $\{0\}$, i.e., if $\hat{V}^x_t=0$ for some $t>0$, then $\hat{V}^x_s=0$ for every $s\ge t$. 
Thus, to complete the construction of $\hat{V}^x$, we describe the process only up to the first time it hits the set $\{0\}$.

\textit{Step 1: Initial condition.}

At time $0$, both processes start at $x$, and therefore equality holds.

\textit{Step 2: Between arrivals.}

Assume that the arrival processes in both systems are identical, i.e., for every $n\ge1$ we set $\hat{T}_n \equiv T_n$. 
This guarantees Assumption~\textbf{($\hat{\text{A}}$0)}. 
Between arrival epochs both workloads decrease at rate $1$ whenever they are positive. 
Hence the ordering is preserved between jumps.

\textit{Step 3: Arrival epochs.}

Consider an index $n\ge1$ such that $\hat{V}^x_{T_n-}>0$ and $\hat{J}_n=1$.

If $\hat{V}^x_{T_n-}=V^x_{T_n-}$, 
we assign to the arriving customer the same type and service time in both systems, i.e., $\hat{Y}_n\equiv Y_n$ and $\hat{S}_n\equiv S_n$.
Thus, by Assumption~\textbf{(A2)} and Assumption~\textbf{($\hat{\text{A}}$2)}, we have $I_n=\hat{I}_n$. 
Hence the arriving customer either joins or balks in both systems simultaneously, and the ordering is preserved (with equality).

If $\hat{V}^x_{T_n-}<V^x_{T_n-}$ and the number of customers who have already joined the controlled queue  is $k-1\geq0$,
we assign to the arriving customer type $\hat{Y}_n\equiv\tilde{Y}_n$ and service requirement $\hat{S}_n\equiv S_{n_k}$ in the controlled system.

If the customer balks in the controlled system, the inequality is preserved.

Note that $\hat{V}^x_{T_n-}>0$ implies that neither process $V^x$ nor $\hat{V}^x$ has been absorbed at $\{0\}$ before time $T_n$. 
Moreover, $S_{n_k}$ is a service amount that has already been added to $V^x$ (but not to $\hat{V}^x$) prior to time $T_n$. 
Therefore, if the arriving customer joins the controlled system, it follows that $\hat{V}_{T_n}^x \le V_{T_n}^x$ and the ordering is preserved. A graphical illustration of this mechanism is provided in Figure~4.
\begin{figure}[H]
    \centering
    \begin{tikzpicture}[scale=0.8]

\draw[->, thick] (0,0) -- (10,0) node[right] {$t$};
\draw[->, thick] (0,0) -- (0,6.5) node[above] {$\textcolor{blue}{V^x_t}, \textcolor{red}{\hat{V}^x_t}$};

\fill[color = blue] (canvas cs: x = 0cm, y = 0cm) circle(2pt);
\fill[color = blue] (canvas cs: x = 2cm, y = 0cm) circle(2pt);
\fill[color = blue] (canvas cs: x = 5cm, y = 0cm) circle(2pt);
\fill[color = blue] (canvas cs: x = 7cm, y = 0cm) circle(2pt);
\fill[color = red] (canvas cs: x = 5cm, y = -0.1cm) circle(2pt);
\fill[color = red] (canvas cs: x = 7cm, y = -0.1cm) circle(2pt);

\draw[line width=0.5mm, color = blue]{} (0cm, 4.1cm) -- (2cm, 3.1cm);
\draw[line width=0.5mm, color = blue]{} (2cm, 5.1cm) -- (8cm, 2.1cm);
\draw[line width=0.5mm, color = red]{} (0cm, 3.5cm) -- (5cm, 1cm);
\draw[line width=0.5mm, color = red]{} (5cm, 1.6cm) -- (7cm, 0.6cm);
\draw[line width=0.5mm, color = red]{} (7cm, 2.5cm) -- (8cm, 2cm);

\draw[line width=0.5mm, color = green, dashed]{} (0cm, 3.5cm) -- (0cm, 4.1cm);
\draw[line width=0.5mm, color = green, dashed]{} (5cm, 1cm) -- (5cm, 1.6cm);
\draw[line width=0.5mm, color = pink, dashed]{} (2cm, 3.1cm) -- (2cm, 5.1cm);
\draw[line width=0.5mm, color = pink, dashed]{} (7cm, 0.6cm) -- (7cm, 2.6cm);

\fill[color = black] (canvas cs: x = 0cm, y = 3.8cm) circle(3pt);
\fill[color = black] (canvas cs: x = 2cm, y = 4cm) circle(3pt);
\fill[color = black] (canvas cs: x = 5cm, y = 3cm) circle(3pt);
\fill[color = black] (canvas cs: x = 7cm, y = 1.5cm) circle(3pt);
\node at (7,2.1) {$\circ$};
\node at (5,2) {$\circ$};
\node[below] at (0,0) {$t_1$};
\node[below] at (2,0) {$t_2$};
\node[below] at (5,0) {$t_3$};
\node[below] at (7,0) {$t_4$};
\end{tikzpicture}\caption{\setstretch{1.3}
This figure illustrates the coupling construction of $V^x$ and $\hat{V}^x$ used in the proof of Theorem~\ref{thm:coupling}. 
The illustration is given in the setting of an $\mathrm{M}_t/\mathrm{G}(\Psi)/1+\mathrm{H}(\Psi)$ queue as described in Definition~\ref{def:FCFS}, where $\Psi$ has a product form. Assume that up to time $t_1$ the two processes coincide, that is, the sets of customers joining the controlled and uncontrolled systems are identical. 
At time $t_1$, an arrival occurs that is rejected by the dispatcher in the controlled system. 
Since the customer’s type (represented by a black filled dot) exceeds the left limit of the uncontrolled process (depicted by the red curve) at the arrival epoch, the customer joins only the uncontrolled system. A similar event occurs at time $t_2$, where another arriving customer is rejected by the dispatcher and therefore joins only the uncontrolled system. At time $t_3$, an arrival is admitted by the dispatcher. 
Because $\hat{V}^x_{t_3-} < V^x_{t_3-}$, the type of the arriving customer is represented by a black hollow circle in the controlled system (and by a black filled dot in the uncontrolled system). 
In the scenario depicted, the customer joins only the controlled queue. 
His service requirement is then identical to that of the customer who entered the uncontrolled system at time $t_1$. Similarly, the customer arriving at time $t_4$ is admitted and joins only the controlled system. 
His service requirement is matched with that of the customer who entered the uncontrolled system at time $t_2$. A direct calculation shows that $\hat{V}^x_{t_4} = V^x_{t_4}$. This construction precludes the possibility of a single upward jump of $\hat{V}^x$ that would cause it to exceed $V^x$.}
\end{figure}
\subsection{Coupling validity}
Note that $\hat{V}^x$ coincides with $\hat{W}^x$ up to the hitting time of $\{0\}$ and from that time it remains zero. Therefore, assuming \textbf{(P2)}, we verify Assumptions \textbf{($\hat{\text{A}}$1)}, \textbf{($\hat{\text{A}}$3)}, and \textbf{($\hat{\text{A}}$4)} on the event that $\hat{W}^x$ has not yet reached the origin. Specifically, for each $n\ge1$, we verify these assumptions on the event $\{\hat{V}_{T_n-}^x > 0\}$, i.e., that $\hat{V}^x$ has not been absorbed before the $n$-th arrival. 

In accordance with the model description, the analysis below is carried out conditionally on $\sigma(\mathcal{T})$. 
Accordingly, we may regard the arrival epochs $T_1,T_2,\ldots$ as a fixed increasing sequence of positive numbers.

We begin by clarifying the terminology used in the coupling construction. 
Define a $k$-th \textit{potential joiner} ($k\geq1$) in the controlled queue as a customer who arrives at that queue after exactly $k-1$ customers have already joined it. Notably, several
potential joiners may arrive at the queue while the number of customers who have already joined the controlled queue remains equal to $k-1$, and therefore several customers may qualify as $k$-th potential joiner under this convention.

Consider now the controlled queue at the arrival epoch $T_n$ of the $n$-th arriving customer. Assume that this customer is a $k$-th potential joiner for some $1\leq k\leq n$. Thus,  by construction, the type of the arriving customer, $\hat{Y}_n$, can be expressed as
\begin{equation}\label{eq: type formula}
\hat{Y}_n
=
\mathbf{1}_{\{M(T_n-)=k-1\}}Y_n
+
\mathbf{1}_{\{M(T_n-)>k-1\}}\tilde{Y}_n,
\end{equation}
where $M(T_n-)$ denotes the number of customers who have joined the \textit{uncontrolled queue} before time $T_n$. 

Similarly, by construction, if the customer arriving at $T_n$ joins the controlled queue, his service time is 
\begin{equation}\label{eq: service formula}
\hat{S}_n
=
\mathbf{1}_{\{M(T_n-)=k-1\}}S_n
+
\mathbf{1}_{\{M(T_n-)>k-1\}}S_{n_k}.
\end{equation}

\textit{Step 1: Verification of \normalfont \textbf{($\hat{\text{A}}$1)}.}

Fix $n\ge1$ and suppose that the $n$-th arriving customer corresponds to a $K$-th potential joiner in the controlled queue for some random $1\le K\leq n$. Let $B$ be a Borel subset of $\mathcal{Y}$ and let $\mathcal{E}_n$ be an event belonging to
\[
\hat{\mathcal{F}}_{T_n-}
\equiv
\sigma\!\left(
\left\{\hat{S}_j : 1\le j < n\right\}
\cup
\left\{\hat{Y}_j : 1\le j < n\right\}
\cup
\left\{\hat{Z}_j : 1\le j \le n\right\}
\right).
\]
 
Using \eqref{eq: type formula}, we obtain
\begin{align*}
\mathbb{P}\!\left(\left\{\hat{V}_{T_n-}^x>0\right\}\cap\hat{Y}_n^{-1}(B)\cap\mathcal{E}_n\right)
&=
\mathbb{P}\!\left(\left\{\hat{V}_{T_n-}^x>0\right\}\cap\{M(T_n-)=K-1\}\cap Y_n^{-1}(B)\cap\mathcal{E}_n\right)\\
&+
\mathbb{P}\!\left(\left\{\hat{V}_{T_n-}^x>0\right\}\cap\{M(T_n-)>K-1\}\cap\tilde{Y}_n^{-1}(B)\cap\mathcal{E}_n\right)\\
&\equiv \Pi_1+\Pi_2 .
\end{align*}
Observe that, by construction, the events
\begin{equation}\label{eq: M-events}
\left\{\hat{V}_{T_n-}^x>0\right\}\cap\left\{M(T_n-)=K-1\right\}
\quad \text{and} \quad
\left\{\hat{V}_{T_n-}^x>0\right\}\cap\left\{M(T_n-)>K-1\right\}
\end{equation}
are determined by the histories of the uncontrolled and controlled systems prior to time $T_n$, that is, by
\[
(W_t)_{0\le t<T_n}\ \ \text{and}\ \ (\hat{W}_t)_{0\le t<T_n}.
\]
Consequently, these events are determined by the random variables 
\[
\{S_j,Y_j,\tilde{Y}_j\,;\,1\le j<n\}
\cup
\{\hat{Z}_j : 1\le j<n\}.
\]
Recalling the definition of $\hat{\mathcal{F}}_{T_n-}$, we deduce that the events
\begin{equation}\label{eq: M-events2}
\mathcal{E}_n\cap\left\{\hat{V}_{T_n-}^x>0\right\}\cap\{M(T_n-)=K-1\}
\quad\text{and}\quad
\mathcal{E}_n\cap\left\{\hat{V}_{T_n-}^x>0\right\}\cap\{M(T_n-)>K-1\}
\end{equation}
are determined by the random variables
\begin{equation}\label{eq: collection1}
\{S_j,Y_j,\tilde{Y}_j\,;\,1\le j<n\}
\cup
\{\hat{Z}_j : 1\le j \le n\}.
\end{equation}

Assumption~\textbf{(A1)} together with the independence stated in \eqref{eq: independence} implies that $(Y_n,\tilde{Y}_n)$ is independent of the random variables belonging to the collection \eqref{eq: collection1}. Hence $(Y_n,\tilde{Y}_n)$ is independent of the events in \eqref{eq: M-events2}, and therefore
\begin{align*}
\Pi_1
&=
\mathbb{P}\!\left[Y_n^{-1}(B)\right]
\mathbb{P}\!\left(\left\{\hat{V}_{T_n-}^x>0\right\}\cap\{M(T_n-)=K-1\}\cap\mathcal{E}_n\right)\\
&=
H(B)\,
\mathbb{P}\!\left(\left\{\hat{V}_{T_n-}^x>0\right\}\cap\{M(T_n-)=K-1\}\cap\mathcal{E}_n\right),
\end{align*}
where the second equality follows from Assumption~\textbf{(A1)}. Similarly,
\begin{align*}
\Pi_2
&=
\mathbb{P}\!\left[\tilde{Y}_n^{-1}(B)\right]
\mathbb{P}\!\left(\left\{\hat{V}_{T_n-}^x>0\right\}\cap\{M(T_n-)>K-1\}\cap\mathcal{E}_n\right)\\
&=
H(B)\,
\mathbb{P}\!\left(\left\{\hat{V}_{T_n-}^x>0\right\}\cap\{M(T_n-)>K-1\}\cap\mathcal{E}_n\right),
\end{align*}
since $\tilde{Y}_n$ is also distributed according to $H(\cdot)$.

Summing the two expressions yields
\[
\mathbb{P}\!\left(\left\{\hat{V}_{T_n-}^x>0\right\}\cap\hat{Y}_n^{-1}(B)\cap\mathcal{E}_n\right)
=\Pi_1+\Pi_2=
H(B)\mathbb{P}\left(\left\{\hat{V}_{T_n-}^x>0\right\}\cap\mathcal{E}_n\right),
\]
which proves \textbf{($\hat{\text{A}}$1)}.

\textit{Step 2: Verification of \normalfont \textbf{($\hat{\text{A}}$3)}.}

Fix $n\ge1$ and suppose that the $n$-th arriving customer corresponds to a $K$-th potential joiner in the controlled queue for some random $1\le K\leq n$. Let $B$ be a Borel subset of $(0,\infty)$ and let $\mathcal{E}_n$ be an event belonging to
\[
\hat{\mathcal{G}}_{T_n-}
\equiv
\sigma\!\left(
\left\{\hat W_t^x;0\le t<T_n\right\}
\cup
\left\{\hat{Y}_j : 1\le j \le n\right\}
\cup
\left\{\hat{Z}_j : 1\le j \le n\right\}
\right).
\]

Using \eqref{eq: service formula}, we write
\begin{align*}
\mathbb{P}\!\left(\left\{\hat{V}_{T_n-}^x>0\right\}\cap\hat{S}_n^{-1}(B)\cap\mathcal{E}_n\right)
&=
\mathbb{P}\!\left(\left\{\hat{V}_{T_n-}^x>0\right\}\cap\{M(T_n-)=K-1\}\cap S_n^{-1}(B)\cap\mathcal{E}_n\right)\\
&\quad+
\mathbb{P}\!\left(\left\{\hat{V}_{T_n-}^x>0\right\}\cap\{M(T_n-)>K-1\}\cap S_{n_{K}}^{-1}(B)\cap\mathcal{E}_n\right)\\
&\equiv P_1+P_2 .
\end{align*}

As in \textit{Step 1}, one may verify that the event
\[
\left\{\hat{V}_{T_n-}^x>0\right\}\cap\{M(T_n-)=K-1\}\cap\mathcal{E}_n
\]
is determined by the random variables 
\[
\{S_j\,;\,1\le j<n\}
\cup
\{Y_j,\tilde{Y}_j,\hat{Z}_j : 1\le j\le n\}.
\]
Therefore, by applying Property \textbf{(P2)} together with Assumption \textbf{(A3)} and the independence stated in \eqref{eq: independence}, we obtain
\begin{align}\label{eq: P1 computation}
P_1
&=
\mathbb{P}\left[S_n^{-1}(B)\right]
\mathbb{P}\!\left(\left\{\hat{V}_{T_n-}^x>0\right\}\cap\{M(T_n-)=K-1\}\cap\mathcal{E}_n\right)\\
&=
\Psi(B)\mathbb{P}\!\left(\left\{\hat{V}_{T_n-}^x>0\right\}\cap\{M(T_n-)=K-1\}\cap\mathcal{E}_n\right).\nonumber
\end{align}

We now turn to the computation of $P_2$. Recall that $\mathcal{E}_n\in\hat{\mathcal{G}}_{T_n-}$ so by construction, for every $1\le k\le \ell\le n$, the event\footnote{Recall that $n_k$ denote the arrival index of the $k$-th customer who joins the uncontrolled queue.} 
\[
\left\{K=k,n_{k}=\ell\right\}
\cap
\left\{\hat{V}_{T_n-}^x>0\right\}
\cap
\{M(T_n-)>k-1\}
\cap
\mathcal{E}_n
\]
is determined by the random variables
\[
\{S_j\,;\,1\le j\leq \ell-1\}
\cup
\{Y_j\,;\,1\le j\leq\ell\}
\cup
\{\tilde{Y}_j,\hat{Z}_j : 1\le j\le n\}.
\]
Hence, using again Property \textbf{(P2)}, Assumption \textbf{(A3)}, and the independence given in \eqref{eq: independence}, we obtain
\begin{align}\label{eq: P2 computation}
P_2
&=
\sum_{1\le k\le\ell\le n}
\mathbb{P}\Big(
\left\{K=k,n_k=\ell\right\}
\cap
\left\{\hat{V}_{T_n-}^x>0\right\}
\cap
\{M(T_n-)>k-1\}
\cap
S_\ell^{-1}(B)
\cap
\mathcal{E}_n
\Big)\nonumber\\
&=
\sum_{1\le k\le\ell\le n}
\mathbb{P}\left[S_\ell^{-1}(B)\right]
\mathbb{P}\Big(
\left\{K=k,n_k=\ell\right\}
\cap
\left\{\hat{V}_{T_n-}^x>0\right\}
\cap
\{M(T_n-)>k-1\}
\cap
\mathcal{E}_n
\Big)\nonumber\\
&=
\Psi(B)
\sum_{1\le k\le\ell\le n}
\mathbb{P}\Big(
\left\{K=k,n_k=\ell\right\}
\cap
\left\{\hat{V}_{T_n-}^x>0\right\}
\cap
\{M(T_n-)>k-1\}
\cap
\mathcal{E}_n
\Big)\nonumber\\
&=
\Psi(B)
\mathbb{P}\left(
\left\{\hat{V}_{T_n-}^x>0\right\}
\cap
\{M(T_n-)>K-1\}
\cap
\mathcal{E}_n
\right).
\end{align}

Combining \eqref{eq: P1 computation} and \eqref{eq: P2 computation}, we obtain
\[
\mathbb{P}\!\left(\left\{\hat{V}_{T_n-}^x>0\right\}\cap\hat{S}_n^{-1}(B)\cap\mathcal{E}_n\right)
=
\Psi(B)
\mathbb{P}\left(\left\{\hat{V}_{T_n-}^x>0\right\}\cap\mathcal{E}_n\right),
\]
from which \textbf{$\hat{\text{A}}3$} follows.

\textit{Step 3: Verification of \normalfont \textbf{($\hat{\text{A}}$4)}.}

Fix $n\ge1$ and on the event $\{\hat{V}_{T_n-}>0\}$, the random variable $\hat{Z}_n$ is used for the first time in the construction of $\hat{V}^x$ at the arrival epoch $T_n$ of the $n$-th arriving customer. 

Moreover, observe that $\hat{Z}_n$ is not used in the construction of $\hat{Y}_n$. Hence, under the proposed construction, the random variables
\begin{equation*}
\left\{\textbf{1}_{\left\{\hat{V}_{T_n-}>0\right\}}\right\}\cup\left\{\hat{W}^x_t : 0\le t < \hat{T}_n\right\}
\cup
\left\{\hat{Y}_k : 1\le k \le n\right\}
\cup
\left\{\hat{Z}_k : 1\le k < n\right\}
\end{equation*}
are measurable with respect to 
\begin{equation*}
\sigma\!\left(
\{S_j,Y_j,T_j : j\ge1\}
\cup
\{\tilde{Y}_k : 1\le k \le n\}
\cup
\{\hat{Z}_k : 1\le k < n\}
\right).
\end{equation*}

Recall that the sequence $(\hat{Z}_n)_{n\ge1}$ consists of IID random variables uniformly distributed on the unit interval. Furthermore, by the independence assumption stated in \eqref{eq: independence}, the variable $\hat{Z}_n$  and independent of
\[
\sigma\!\left(\left\{\textbf{1}_{\left\{\hat{V}_{T_n-}>0\right\}}\right\}\cup
\{S_j,Y_j,T_j : j\ge1\}
\cup
\{\tilde{Y}_k : 1\le k \le n\}
\cup
\{\hat{Z}_k : 1\le k < n\}
\right).
\]

Consequently, $\hat{Z}_n$ is is uniformly distributed on the unit interval and independent of
\[
\sigma\!\left(\left\{\textbf{1}_{\left\{\hat{V}_{T_n-}>0\right\}}\right\}\cup
\{\hat{W}^x_t : 0\le t < \hat{T}_n\}
\cup
\{\hat{Y}_k : 1\le k \le n\}
\cup
\{\hat{Z}_k : 1\le k < n\}
\right),
\]
which verifies Assumption~\textbf{($\hat{\text{A}}$4)}. $\blacksquare$

\section{Conclusion}\label{sec: future research}
This paper revisits the effectiveness of admission control in stochastic service systems with state-dependent customer behavior. While admission control is traditionally expected to reduce congestion by limiting arrivals, our results show that this intuition may fail in the presence of endogenous feedback effects.

Using a stochastic ordering framework, we demonstrate that admission control can, in fact, increase congestion by altering future arrival patterns through the system state. We construct explicit counterexamples illustrating this phenomenon and identify the mechanisms responsible for this unexpected behavior.

On the positive side, we establish a structural condition under which admission control remains effective over the initial busy period. In particular, independence between customers’ types and service requirements eliminates the feedback loop and restores stochastic dominance.

Overall, our results suggest that classical control policies designed under exogenous arrival assumptions may fail to deliver their intended performance when users respond strategically to system conditions. In particular, the performance of admission control policies critically depends on the interaction between control actions and user behavior. This highlights the need for control design in stochastic service systems to explicitly account for endogenous responses to system states.

Our study highlights an important direction for future research: the systematic study of control policies that explicitly account for behavioral feedback, as well as the development of robust mechanisms that remain effective under state-dependent user responses. In particular, it would be of interest to investigate more general forms of feedback between system state and arrivals, to understand the robustness of alternative control policies, and to extend the analysis to networked service systems, where interactions between control and user behavior may be further amplified.


\begin{thebibliography}{99}
\bibitem{Adan1989}
Adan, I., \& Van der Wal, J. (1989). Monotonicity of the throughput of a closed queueing network in the number of jobs. \textit{Operations Research}, \textbf{37}, 953-957.

\bibitem{Ata2006}
Ata, B., \& Shneorson, S. (2006). Dynamic control of an M/M/1 service system with adjustable arrival and service rates. \textit{Management Science}, \textbf{52}, 1778-1791.

\bibitem{Baccelli1984}
Baccelli, F., Boyer, P., \& H\'ebuterne, G. (1984). Single-server queues with impatient customers. \textit{Advances in Applied Probability}, \textbf{16}, 887-905.

\bibitem{Boxma2010}
Boxma, O., Perry, D., Stadje, W., \& Zacks, S. (2010). The busy period of an M/G/1 queue with customer impatience. \textit{Journal of Applied Probability}, \textbf{47}, 130-145.

\bibitem{Boxma2011}
Boxma, O., Perry, D., \& Stadje, W. (2011). The M/G/1+ G queue revisited. \textit{Queueing Systems}, \textbf{67}, 207-220.

\bibitem{Bhattacharya1991}
Bhattacharya, P. P., \& Ephremides, A. (1991). Stochastic monotonicity properties of multiserver queues with impatient customers. \textit{Journal of Applied Probability}, \textbf{28}, 673-682.

\bibitem{Bodas2024}
Bodas, S. A., \& Jacobovic, R. (2024). Useful stochastic bounds in time-varying queues with service and patience times having general joint distribution. \textit{arXiv preprint arXiv:2406.12745}.

\bibitem{Carr2000}
Carr, S., \& Duenyas, I. (2000). Optimal admission control and sequencing in a make-to-stock/make-to-order production system. \textit{Operations research}, \textbf{48}, 709-720.

\bibitem{Cohen2024}
Cohen, A., Subramanian, V., \& Zhang, Y. (2024). Learning-based optimal admission control in a single-server queuing system. \textit{Stochastic systems}, \textbf{14}, 69-107.

\bibitem{Debo2021}
Debo, L., \& Li, C. (2021). Design and pricing of discretionary service lines. Management Science, \textbf{67}, 2251-2271.

\bibitem{Feldman2022}
Feldman, P., \& Segev, E. (2022). The important role of time limits when consumers choose their time in service. \textit{Management Science}, \textbf{68}, 6666-6686.

\bibitem{Gat2026}
Gat, Y., \& Jacobovic, R. (2026). Priority queues with discretionary services. \textit{Unpublished manuscript}.

\bibitem{Hassin2016}
Hassin, R. (2016). \textit{Rational queueing}. CRC press.

\bibitem{Hassin2003}
Hassin, R., \& Haviv, M. (2003). \textit{To queue or not to queue: Equilibrium behavior in queueing systems (Vol. 59)}. Springer Science \& Business Media.

\bibitem{Haviv1998}
Haviv, M., \& Puterman, M. L. (1998). Bias optimality in controlled queueing systems. \textit{Journal of Applied Probability}, \textbf{35}, 136-150.

\bibitem{Heyman1982}
Heyman, D. P. (1982). On Ross's conjectures about queues with non-stationary Poisson arrivals. \textit{Journal of Applied Probability}, \textbf{19}, 245-249.

\bibitem{Jacobovic2022a}
Jacobovic, R. (2022). Internalization of externalities in queues with discretionary services. \textit{Queueing Systems}, \textbf{100}, 453-455.

\bibitem{Jacobovic2022b}
Jacobovic, R. (2022). Regulation of a single-server queue with customers who dynamically choose their service durations. \textit{Queueing Systems}, \textbf{101}, 245-290.


\bibitem{Jouini2007}
Jouini, O., \& Dallery, Y. (2007). Monotonicity properties for multiserver queues with reneging and finite waiting lines. \textit{Probability in the Engineering and Informational Sciences}, \textbf{21}, 335-360.

\bibitem{Knudsen1972}
Knudsen, N. C. (1972). Individual and social optimization in a multiserver queue with a general cost-benefit structure. \textit{Econometrica: Journal of the Econometric Society}, 515-528.

\bibitem{Lewis1979}
Lewis, P. W., \& Shedler, G. S. (1979). Simulation of nonhomogeneous Poisson processes by thinning. \textit{Naval research logistics quarterly}, \textbf{26}, 403-413.

\bibitem{Maglaras2005}
Maglaras, C., \& Zeevi, A. (2005). Pricing and design of differentiated services: Approximate analysis and structural insights. \textit{Operations Research}, \textbf{53}, 242-262.

\bibitem{Moyal2022}
Moyal, P., \& Perry, O. (2022). Many-server limits for service systems with dependent service and patience times. \textit{Queueing Systems}, \textbf{100}, 337-339.

\bibitem{Naor1969}
Naor, P. (1969). The regulation of queue size by levying tolls. \textit{Econometrica: journal of the Econometric Society}, 15-24.

\bibitem{Rolski1981}
Rolski, T. (1981). Queues with non-stationary input stream: Ross's conjecture. \textit{Advances in Applied Probability}, \textbf{13}, 603-618.

\bibitem{Rolski1986}
Rolski, T. (1986). Upper bounds for single server queues with doubly stochastic Poisson arrivals. \textit{Mathematics of Operations Research}, \textbf{11}, 442-450.


\bibitem{Ross1978}
Ross, S. M. (1978). Average delay in queues with non-stationary Poisson arrivals. \textit{Journal of Applied Probability}, 15, 602-609.


\bibitem{Shanthikumar1986}
Shanthikumar, J. G., \& Yao, D. D. (1986). The effect of increasing service rates in a closed queuing network. \textit{Journal of Applied Probability}, \textbf{23}, 474-483.

\bibitem{Shanthikumar1987}
Shanthikumar, J. G., \& Yao, D. D. (1987). Stochastic monotonicity of the queue lengths in closed queueing networks. \textit{Operations Research}, \textbf{35}, 583-588.

\bibitem{Shanthikumar1989}
Shanthikumar, J. G., \& Yao, D. D. (1989). Stochastic monotonicity in general queueing networks. \textit{Journal of Applied Probability}, \textbf{26}, 413-417.

\bibitem{Van Dijk1989}
Van Dijk, N. M., \& van der Wal, J. (1989). Simple bounds and monotonicity results for finite multi-server exponential tandem queues. \textit{Queueing Systems}, \textbf{4}, 1-15.

\bibitem{Van Nunen1983}
Van Nunen, J. A. E. E., \& Puterman, M. L. (1983). Computing optimal control limits for GI/M/s queuing systems with controlled arrivals. \textit{Management Science}, \textbf{29}, 725-734.

\bibitem{Ward2008}
Ward, A. R., \& Kumar, S. (2008). Asymptotically optimal admission control of a queue with impatient customers. \textit{Mathematics of Operations Research}, \textbf{33}, 167-202.

\bibitem{Wu2019}
Wu, C., Bassamboo, A., \& Perry, O. (2019). Service system with dependent service and patience times. \textit{Management Science}, \textbf{65}, 1151-1172.

\bibitem{Yu2023}
Yu, L., \& Perry, O. (2023). Many-server heavy-traffic limits for queueing systems with perfectly correlated service and patience times. \textit{Mathematics of Operations Research}, \textbf{48}, 1119-1157.
\end{thebibliography}
\end{document}